\documentclass[11pt,hidelinks]{amsart}

\usepackage[margin=1.28in]{geometry}
\usepackage{amsmath,amssymb,amsthm,mathtools}
\usepackage{enumitem}
\usepackage{xcolor}
\usepackage{tikz-cd}
\usepackage{hyperref}
\usepackage{orcidlink,tikz-cd}
\usepackage{mathrsfs}

\definecolor{darkgoldenrod}{rgb}{0.72,0.53,0.04}
\definecolor{goldmetallic}{rgb}{0.83,0.69,0.22}
\hypersetup{
  colorlinks=true,
  linkcolor=darkgoldenrod,
  urlcolor=goldmetallic,
  citecolor=darkgoldenrod
}

\usepackage{amsmath,amssymb,amsthm,mathtools}
\usepackage{enumitem}
\usepackage{microtype}
\usepackage{hyperref}
\usepackage[backend=biber,style=numeric,sorting=nyt,maxbibnames=99]{biblatex}
\newtheorem{theorem}{Theorem}[section]
\newtheorem{proposition}[theorem]{Proposition}
\newtheorem{lemma}[theorem]{Lemma}
\newtheorem{corollary}[theorem]{Corollary}
\newtheorem{remark}[theorem]{Remark}

\newcommand{\Q}{\mathbf Q}
\newcommand{\Z}{\mathbf Z}
\newcommand{\F}{\mathbf F}

\newcommand{\Gal}{\operatorname{Gal}}
\newcommand{\GL}{\operatorname{GL}}
\newcommand{\PGL}{\operatorname{PGL}}
\newcommand{\GSp}{\operatorname{GSp}}
\newcommand{\Sp}{\operatorname{Sp}}
\newcommand{\PGSp}{\operatorname{PGSp}}
\newcommand{\ind}{\operatorname{ind}}
\newcommand{\Disc}{\operatorname{Disc}}
\newcommand{\Jac}{\operatorname{Jac}}

\newcommand{\PP}{\mathbf P}

\title[Counting symplectic Galois number fields]{Counting number fields with symplectic Galois group}
\author[A.~Ray]{Anwesh Ray\, \orcidlink{0000-0001-6946-1559}}
\address{Chennai Mathematical Institute, H1, SIPCOT IT Park, Siruseri, Kelambakkam 603103, Tamil Nadu, India.}
\email{anwesh@cmi.ac.in}

\subjclass[2020]{11R32, 11R29, 11G30, 14H40}
\keywords{number fields, symplectic groups, Jacobians, Galois representations, discriminants, hyperelliptic curves}

\begin{document}

\begin{abstract}
Let $n\geq 1$ and let $\ell$ be an odd prime.  Let
$G=\mathrm{PGSp}_{2n}(\F_\ell)$ or $\mathrm{GSp}_{2n}(\F_\ell)$, and fix a faithful
transitive permutation representation
$\pi:G\longrightarrow S_d$. We study degree-$d$ number fields whose
Galois closures have Galois group $G$ and whose associated permutation
representation is $\pi$. For $\sigma\in S_d$, write
$\operatorname{ind}(\sigma)$ for its permutation index, namely
$\operatorname{ind}(\sigma)
=
d-\#\{\text{orbits of $\sigma$ on $\{1,\ldots,d\}$}\}$.
If $\tau$ denotes a symplectic transvection, or its image in the
projective symplectic group, we prove that the number of such fields
with absolute discriminant at most $X$ is bounded below by a constant
multiple of
$X^{1/(2n\,\operatorname{ind}(\pi(\tau)))}$. For the natural vector and projective actions, the exponents we obtain are asymptotically $1/(4n)$ of those predicted by the weak form of Malle's conjecture as $\ell\to\infty$. The fields are constructed from the mod-$\ell$ Galois representations
attached to the Jacobians of a one-parameter family of hyperelliptic
curves. The proof combines large symplectic monodromy for this family with a
squarefree sieve.
\end{abstract}
\maketitle

\section{Introduction}\label{sec:introduction}
We place our results in the context of Malle's conjecture. Let
$\pi:G\rightarrow S_d$ be a faithful transitive permutation
representation of a finite group $G$, and set
\[
a_\pi(G)
=
\min_{1\neq g\in G}\operatorname{ind}(\pi(g)),
\]
where $\operatorname{ind}(\sigma)$ denotes the permutation index of
$\sigma\in S_d$. Let $N_\pi(G;X)$ denote the number of degree-$d$
number fields with absolute discriminant at most $X$, whose Galois
closures have Galois group $G$ and whose associated permutation
representation is equivalent to $\pi$. Then the strong form of
Malle's conjecture predicts that
\[N_\pi(G;X)\sim c_{\pi,G}X^{1/a_\pi(G)}
(\log X)^{b_{\pi,G}-1}\]
for certain constants $c_{\pi,G}>0$ and $b_{\pi,G}$; see \cite{Malle2002,Malle2004}. The precise
prediction for the logarithmic exponent is known to require
modification in general, following the counterexamples of Kl\"uners
\cite{Kluners2005}; see also \cite{Turkelli2015}. The weak form of Malle's conjecture states that for every $\epsilon>0$, one has
\[
X^{1/a_\pi(G)-\epsilon}
\ll_{\pi,G,\epsilon}
N_\pi(G;X)
\ll_{\pi,G,\epsilon}
X^{1/a_\pi(G)+\epsilon}.
\]

In the setting of this paper, where
$G=\GSp_{2n}(\F_\ell)$ or $\PGSp_{2n}(\F_\ell)$, our lower bounds are
governed by the permutation index of a symplectic transvection.  If
$\tau$ denotes such a transvection, or its image in the projective
group, we prove
\[
N_\pi(G;X)
\gg
X^{1/(2n\,\operatorname{ind}(\pi(\tau)))}.
\]

First consider the case when $n=1$, i.e., for the groups $\GL_2(\F_\ell)$ and $\PGL_2(\F_\ell)$. In \cite{Ray2025}, a two-parameter family of elliptic curves is combined with a large-sieve estimate controlling the multiplicity with which a residual representation occurs. Cristante \cite{Cristante2025} instead uses a one-parameter family with squarefree discriminant. The purpose of the present paper is to study generalizations for symplectic representations arising from Jacobians of hyperelliptic curves. There is a large supply of Jacobians with maximal residual image. Density-one maximal-image results in arithmetic families were established in \cite{LandesmanEtAl2019,LandesmanEtAl2020}, and explicit constructions with surjective residual representations are given in \cite{AnniDokchitser2020}. For the counting problem considered here, however, it is useful to work with a one-parameter family in which the varying primes of bad reduction are detected by the prime divisors of a single polynomial in the parameter. We shall return to the construction after stating the main results.

\subsection{Main results}

Fix $n\geq 1$ and an odd prime $\ell$, and let $V$ be a $2n$-dimensional symplectic $\F_\ell$-space. Put $G^+=\GSp(V)$ and $G=\PGSp(V)$. A symplectic transvection is a nontrivial element $\tau\in\Sp(V)$ for which $\tau-1$ has rank one. Although for odd $\ell$ there are two $\Sp(V)$-conjugacy classes of transvections, all nontrivial symplectic transvections are conjugate in $\GSp(V)$ (see Lemma~\ref{lem:GSp-conjugacy}). Consequently the index occurring in the statements below is independent of the transvection chosen.

The projective result is the principal theorem of the paper.

\begin{theorem}\label{thm:PGSp-main}
Let $n\geq 1$ and let $\ell$ be an odd prime. Let
$\pi:\PGSp_{2n}(\F_\ell)\hookrightarrow S_d$ be a faithful transitive permutation representation, and let $\overline\tau$ denote the projective image of a symplectic transvection. Then
\[
 N_\pi\bigl(\PGSp_{2n}(\F_\ell);X\bigr)
 \gg_{n,\ell,\pi}
 X^{1/(2n\,\ind(\pi(\overline\tau)))}.
\]
\end{theorem}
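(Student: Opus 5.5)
We will build the fields from the mod-$\ell$ Galois representations of a one-parameter family of hyperelliptic Jacobians. Fix a squarefree polynomial $f(x)\in\Z[x]$ of degree $2n+1$ (or $2n+2$) and consider $C_t:\ y^2=f(x)-t$ for $t\in\Z$, whose Jacobian $J_t$ has dimension $n$. The discriminant of $f(x)-t$ in $x$ is a polynomial $D(t)\in\Z[t]$ of degree $2n$ (the number of critical values of $f$). If $f$ is chosen Morse, with distinct critical values, then near each root of $D$ the curve acquires a single node. Hence the local monodromy of $J_t[\ell]$ at such a prime is a symplectic transvection, by Picard--Lefschetz theory. Two consequences follow. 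First, by the classical argument (Kowalski, J.-K.~Yu, Hall: the geometric monodromy is generated by transvections and acts irreducibly, so it contains $\Sp_{2n}(\F_\ell)$), the mod-$\ell$ monodromy of the family is all of $\GSp_{2n}(\F_\ell)$. Hilbert irreducibility then gives a positive proportion of integers $t$ with $\overline\rho_{J_t,\ell}(G_\Q)=\GSp_{2n}(\F_\ell)$. Second, at a prime $p\nmid 2\ell\cdot(\text{fixed bad set})$ with $p\,\|\,D(t)$, the curve $C_t$ has semistable reduction with exactly one node. Inertia at $p$ therefore acts through a transvection (or trivially), tamely.

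Next we pass to the permutation field. Let $K_t$ be the fixed field of $\ker(\overline\rho_{J_t,\ell})$ and $L_t\subset K_t^{Z}$ the degree-$d$ subfield cut out by $\pi$ composed with $\GSp\to\PGSp$. Its Galois group is $\PGSp_{2n}(\F_\ell)$ whenever the image is full. By the tame conductor–discriminant formula, $v_p(\Disc L_t)=\ind(\pi(\overline\tau))$ at each good prime $p$ with $p\,\|\,D(t)$. Primes of $\ell$, $2$, and the finitely many bad primes of the family contribute a bounded amount. To control primes with $p^2\mid D(t)$, we restrict to $t$ with $D(t)$ squarefree outside a fixed finite set. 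Then $|\Disc L_t|\ll |D(t)|^{\ind(\pi(\overline\tau))}\ll |t|^{2n\,\ind(\pi(\overline\tau))}$, so $|t|\le T$ with $T\asymp X^{1/(2n\,\ind)}$ yields fields of discriminant $\le X$.

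The main obstacle is producing $\gg T$ values of $t\le T$ with $D(t)$ squarefree and full image. Squarefree values of a polynomial of degree $2n\geq 3$ are not known unconditionally, since Hooley and Erdős only handle degree $\le 3$. I would therefore not require full squarefreeness. Instead I would use the weaker bound $v_p(\Disc L_t)\le C\,v_p(D(t))$ for all $p$, which comes from a uniform bound on discriminant exponents in tame or bounded-wild extensions. I would then apply a sieve only for small primes $p\le T^{\delta}$, combined with the trivial estimate that $t$ with $p^2\mid D(t)$ for some $p>T^\delta$ is handled by the main-term factor. If that turns out to be lossy, the fallback is to choose $f$ so that $D(t)$ factors into polynomials of small degree and apply Hooley-type arguments to each factor. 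Alternatively, one can take $t=a/b$ with a homogenized two-variable discriminant and use Greaves' or Bhargava's squarefree sieve for binary forms. In either case we intersect with the positive-density Hilbert irreducibility set.

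Finally, we bound how often the same $L$ arises, so that distinct $t$ give many distinct fields. Since $L_t$ determines the set of primes dividing $D(t)$ to first order, and ramified primes of a field of discriminant $\le X$ number $O(\log X)$, I would argue the following. A field $L$ arises from $O_\epsilon(T^\epsilon)$ values of $t$, because the squarefree part of $D(t)$ is essentially determined by $\Disc L$, and $D(t)=m$ has $O(1)$ solutions. Dividing then gives $N_\pi\gg X^{1/(2n\,\ind(\pi(\overline\tau)))}$, with the $\epsilon$ loss removed by the squarefree constraint making the fibres bounded.
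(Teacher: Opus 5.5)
Your architecture matches the paper's: large symplectic monodromy, Hilbert irreducibility, a single node at $p\,\|\,D(t)$ giving transvection inertia, and the tame discriminant formula. But the step that actually carries the theorem is left open, and for $n\geq2$ none of your workarounds closes it. That step is producing $\gg T$ parameters that give pairwise distinct fields of discriminant $\ll T^{2n\,\ind(\pi(\overline\tau))}$.

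The discriminant bound is not the real problem. For $p$ outside the bad set, $p^k\,\|\,D(t)$ still gives a single node of thickness $k$. So inertia is generated by a power of a transvection, and $|\Disc L_t|\ll\operatorname{rad}(D(t))^{\ind(\pi(\overline\tau))}$ without any squarefreeness.

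The problem is distinctness. The field only sees the primes $p\mid D(t)$ with $\ell\nmid v_p(D(t))$, which determine neither $D(t)$ nor even its radical. So your $O_\epsilon(T^{\epsilon})$ multiplicity claim is unjustified. Even granted, it yields only $X^{1/(2n\,\ind(\pi(\overline\tau)))-\epsilon}$. Removing the $\epsilon$ ``by the squarefree constraint'' is circular, since that constraint is exactly what you abandoned.

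Sieving only $p\leq T^{\delta}$ does not help. The condition $p^2\mid D(t)$ allows $p$ up to $\asymp T^{n}$. For $p>T$, each of the $O(n)$ relevant classes modulo $p^2$ meets $[1,T]$ at most once, so the trivial tail bound is of size $T^{n}/\log T$, which exceeds $T$. This large-prime range is exactly why squarefree values of polynomials of degree $\geq4$ are open.

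The binary-form route is no better in general. Squarefree sieves for forms are known only in small degree. Also, primes dividing the denominator of $t=a/b$ see the degeneration at $t=\infty$, whose local monodromy is not a transvection, and your discriminant analysis does not cover them.

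The missing idea is to build the family so that the degeneration polynomial splits into linear forms. Along a fixed arithmetic progression, these forms should be pairwise coprime and prime to the bad primes. The paper takes $y^2=(x-T)f(x)$ with $f=\prod_{i=1}^{2n}(x-im)$, $m=2\ell(2n)!$, $T=1+mu$, so the degeneration polynomial is $P(u)=\prod_i(1+m(u-i))$. A prime dividing two factors divides $m(i-j)$ and hence $m$, whereas every factor is $\equiv1$ modulo each prime dividing $m$.

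Consequently $p^2\mid P(u)$ forces $p^2$ to divide a single linear factor of size $\ll Y$, so $p\ll Y^{1/2}$. A CRT sieve over $p\leq z$ together with the tail bound $\ll Y/z+Y^{1/2}$ then gives a positive proportion of squarefree values. Since $P$ is strictly increasing for $u>2n$ and a positive squarefree integer is determined by its prime support, distinct parameters give distinct fields with no $\epsilon$-loss.

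Intersecting with the Hilbert irreducibility set is harmless only because its complement is $O(Y^{1/2}\log Y)$. Two sets of merely positive proportion need not intersect in a set of positive proportion.

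Your ``fallback'' points in this direction, and it can be realized in your own family. Take $f\in\Z[x]$ whose derivative has distinct integer roots and whose critical values $c_i$ are distinct. Replace $f$ by $Mf$ for an $M$ divisible by $2\ell$, the bad primes, and every prime dividing some $c_i-c_j$, and restrict to $t\equiv1\pmod M$. As written, though, the proposal neither commits to this nor checks the coprimality that makes the sieve elementary.
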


We also obtain the corresponding linear statement for an arbitrary faithful transitive permutation representation of $\GSp_{2n}(\F_\ell)$.

\begin{theorem}\label{thm:GSp-main}
Let $n\geq 1$ and let $\ell$ be an odd prime. Let
$\pi:\GSp_{2n}(\F_\ell)\hookrightarrow S_d$ be a faithful transitive permutation representation, and let $\tau$ be a symplectic transvection. Then
\[
 N_\pi\bigl(\GSp_{2n}(\F_\ell);X\bigr)
 \gg_{n,\ell,\pi}
 X^{1/(2n\,\ind(\pi(\tau)))}.
\]
\end{theorem}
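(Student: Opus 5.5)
We prove Theorem~\ref{thm:GSp-main} by the same construction as Theorem~\ref{thm:PGSp-main}, working with the linear representation $\bar\rho_{J_t,\ell}:G_\Q\to\GSp_{2n}(\F_\ell)$ itself rather than its projectivization. Fix a one-parameter family of genus-$n$ hyperelliptic curves $C_t: y^2=f(x)+t$ (or a similar family), where $\deg f\in\{2n+1,2n+2\}$ and $f$ is chosen with distinct critical values. As $t$ varies, the discriminant $\Delta(t)$ of $f(x)+t$ is a polynomial in $t$ of degree $2n$ (or so) with simple roots. At a prime $p$ exactly dividing $\Delta(t)$, the curve acquires a single node. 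The inertia at $p$ then acts on $J_t[\ell]$ through a unipotent element $u$ with $u-1$ of rank at most one, i.e.\ a transvection or trivial; it is a genuine transvection when $\ell\nmid v_p(\Delta(t))=1$. By Lemma~\ref{lem:GSp-conjugacy} all such transvections are $\GSp$-conjugate, so the tame discriminant exponent at $p$ of the degree-$d$ field $K_t$ cut out by $\pi\circ\bar\rho_{J_t,\ell}$ is exactly $\ind(\pi(\tau))$.

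The key steps are as follows.

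\emph{(1) Large monodromy.} Show that the geometric monodromy of $J_t[\ell]$ over $\Q(t)$ is all of $\Sp_{2n}(\F_\ell)$. This follows from the local transvection monodromy at the roots of $\Delta$ together with irreducibility, via the classical theorem that an irreducible subgroup of $\Sp_{2n}(\F_\ell)$ generated by transvections is the full group. The cyclotomic character then gives arithmetic monodromy $\GSp_{2n}(\F_\ell)$.

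\emph{(2) Hilbert irreducibility with density.} Deduce that for all but $O(T^{1-\delta})$ integers $t\le T$, the image of $\bar\rho_{J_t,\ell}$ is $\GSp_{2n}(\F_\ell)$. This can be done, for instance, via the Cohen/Serre effective Hilbert irreducibility theorem for thin sets, which gives $O(T^{1/2}\log T)$ exceptions.

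\emph{(3) Squarefree sieve.} Restrict to $t$ in a fixed congruence class such that $\Delta(t)$ is squarefree away from a fixed finite set $S$ of primes (including $2$, $\ell$ and the primes of bad reduction of the generic fibre). Since $\Delta$ has degree $2n$, this requires the squarefree sieve for polynomials of degree $2n$. Unconditionally, Hooley and Granville-type results hold only for small degree, so instead I would take a family where $\Delta(t)$ factors into linear or quadratic pieces, or use a two-variable homogenized version, in order to invoke known squarefree-value results. The aim is a positive proportion of $t\le T$.

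\emph{(4) Discriminant bound.} For such $t$, ramification of $K_t$ outside $S\cup\{\ell\}$ occurs only at primes $p\mid\Delta(t)$, each contributing $p^{\ind(\pi(\tau))}$. The primes in $S\cup\{\ell\}$ contribute at most a constant, since in a bounded number of bounded-degree extensions the wild part is bounded. Hence
\[|\Disc K_t|\ll |\Delta(t)|^{\ind(\pi(\tau))}\ll T^{2n\,\ind(\pi(\tau))}.\]

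\emph{(5) Multiplicity.} Show that each field $K$ arises from $O(1)$ values of $t$. The ramified primes determine the squarefree part of $\Delta(t)$ away from $S$, which pins $t$ down up to boundedly many choices. Since $\Delta(t)$ is essentially squarefree, the discriminant $\Disc K$ recovers $\prod_{p\mid\Delta(t),\,p\notin S}p$, and $\Delta(t)=m$ has at most $\deg\Delta$ solutions. Setting $X=T^{2n\,\ind(\pi(\tau))}$ then gives
\[N_\pi\bigl(\GSp_{2n}(\F_\ell);X\bigr)\gg X^{1/(2n\,\ind(\pi(\tau)))}.\]

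The main obstacle is step (3): obtaining a positive proportion of squarefree values of a degree-$2n$ polynomial unconditionally. My first attempt would be to choose $f$ so that $\Delta(t)$ splits over $\Q$ into linear factors, so that only the squarefree-ness of products of linear forms is needed, which is elementary. I would then check that the local monodromy remains transvection-type and that step (1) still holds.
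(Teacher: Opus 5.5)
Your outline is workable, and its skeleton matches the paper's: big monodromy, Cohen--Serre Hilbert irreducibility, an elementary sieve on a product of linear forms, the tame discriminant formula, and separating fields by ramification support. The real difference is the family, and hence the monodromy input.

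The paper uses the moving-branch-point pencil $y^2=(x-T)f(x)$ with $f=\prod_i(x-im)$. Its degeneration polynomial $f(T)$ splits into linear factors by construction. The substitution $T=1+mu$ with $m=2\ell(2n)!$ makes $P(u)$ prime to $S$, so distinct $u$ give distinct fields by monotonicity of $P$. Big monodromy is quoted from Hall's theorem for exactly this pencil over $\F_q(T)$ and carried to characteristic $0$ by tame specialization. The node there comes from two colliding branch points and has thickness $2$; this is where $\ell$ odd enters the local step.

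Your Lefschetz-type family $y^2=f(x)+t$ needs three things you have not fully pinned down:
\begin{enumerate}
\item You must take $\deg f=2n+1$. With $\deg f=2n+2$ the discriminant has degree $2n+1$ and you lose the stated exponent.
\item Your fix for step (3) works. Take $f$ Morse with rational critical points and distinct critical values (e.g.\ $f'=(2n+1)\prod_j(x-c_j)$ with generic integers $c_j$), so that $\Delta(t)$ is a constant times $\prod_j(t+f(c_j))$. This is compatible with step (1), since Morse-ness is exactly what step (1) uses.
\item You must actually prove irreducibility, which you only assert. The proof is standard. The vanishing paths pull back under $f$ to $2n$ arcs on the $2n+1$ roots forming a tree; it is connected because the monodromy of $f$ is transitive and generated by the corresponding transpositions. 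The lifts $\delta_j$ of these arcs form a basis of $H_1$ with connected intersection graph. An invariant subspace therefore either pairs nontrivially with some $\delta_j$, hence contains it and then every $\delta_i$, or is orthogonal to all of them and is zero. McLaughlin's theorem then gives $\Sp_{2n}(\F_\ell)$.
\end{enumerate}
In exchange you get a self-contained characteristic-zero monodromy argument. Your nodes have thickness $v_p(t+f(c_j))=1$, so the model is regular and the monodromy coefficient is $1$.

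One further point needs tightening. In step (5), the ramification support determines only the prime-to-$S$ part of $\Delta(t)$. Choose your congruence class modulo high powers of the primes in $S$ so that each $v_p(\Delta(t))$, $p\in S$, is constant. Then $\Delta(t)$ is determined up to sign, and your $O(1)$ multiplicity bound follows.
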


The two most immediate examples come from the natural vector and projective actions.

\begin{corollary}\label{cor:natural-actions}
For the natural action of $\GSp_{2n}(\F_\ell)$ on $V\setminus\{0\}$, which has degree $\ell^{2n}-1$, one has
\[
 N_\pi\bigl(\GSp_{2n}(\F_\ell);X\bigr)
 \gg_{n,\ell}
 X^{1/(2n\ell^{2n-2}(\ell-1)^2)}.
\]
For the natural projective action of $\PGSp_{2n}(\F_\ell)$ on $\PP(V)$, which has degree $(\ell^{2n}-1)/(\ell-1)$, one has
\[
 N_\pi\bigl(\PGSp_{2n}(\F_\ell);X\bigr)
 \gg_{n,\ell}
 X^{1/(2n\ell^{2n-2}(\ell-1))}.
\]
\end{corollary}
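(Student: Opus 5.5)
Corollary~\ref{cor:natural-actions} follows from Theorems~\ref{thm:GSp-main} and~\ref{thm:PGSp-main} once we compute $\ind(\pi(\tau))$ and $\ind(\pi(\overline\tau))$ for the two natural actions. We first check that these actions are faithful and transitive, so the theorems apply. The group $\GSp(V)$ acts transitively on $V\setminus\{0\}$ because $\Sp(V)$ already does, and the action is faithful since the action is linear. The group $\PGSp(V)$ acts transitively on $\PP(V)$. It acts faithfully because the only elements of $\GSp(V)$ fixing every line are the scalars. The degrees are $\ell^{2n}-1$ and $(\ell^{2n}-1)/(\ell-1)$.

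We fix a transvection $\tau(v)=v+a\,\omega(v,u)u$ with $u\ne0$ and $a\in\F_\ell^\times$. Its fixed space is the hyperplane $H=u^{\perp}$, which contains $u$.

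For the vector action we count orbits of $\langle\tau\rangle$ on $V\setminus\{0\}$. The element $\tau$ has order $\ell$, so its orbits have size $1$ or $\ell$. The fixed points are $H\setminus\{0\}$, which has $\ell^{2n-1}-1$ elements. The remaining $\ell^{2n}-\ell^{2n-1}$ vectors fall into orbits of size $\ell$. This gives
\[
\#\text{orbits}=\ell^{2n-1}-1+\ell^{2n-2}(\ell-1),
\qquad
\ind(\pi(\tau))=(\ell^{2n}-\ell^{2n-1})-\ell^{2n-2}(\ell-1)=\ell^{2n-2}(\ell-1)^2 .
\]

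For the projective action, $\overline\tau$ also has order $\ell$. A line $[v]$ is fixed exactly when $\tau v=\lambda v$. Since $\tau$ is unipotent, this forces $\lambda=1$, so the fixed lines are the lines of $\PP(H)$. There are $(\ell^{2n-1}-1)/(\ell-1)$ of them. The other lines number
\[
\frac{\ell^{2n}-\ell^{2n-1}}{\ell-1}=\ell^{2n-1},
\]
and they fall into orbits of size $\ell$, giving $\ell^{2n-2}$ such orbits. Hence
\[
\ind(\pi(\overline\tau))=\ell^{2n-1}-\ell^{2n-2}=\ell^{2n-2}(\ell-1).
\]

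Substituting these two indices into Theorems~\ref{thm:GSp-main} and~\ref{thm:PGSp-main} gives the stated exponents. The only step needing care is the projective fixed-point count, where one must rule out non-unit eigenvalues; unipotence of $\tau$ handles this.
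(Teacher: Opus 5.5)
Your proposal is correct and follows the paper's own route: it checks faithfulness and transitivity of the two natural actions, computes the transvection indices $\ell^{2n-2}(\ell-1)^2$ and $\ell^{2n-2}(\ell-1)$ by the same fixed-point and $\ell$-cycle count, using unipotence to identify the fixed lines, and substitutes these into Theorems~\ref{thm:GSp-main} and~\ref{thm:PGSp-main}. One small point you assert without proof is that $\overline\tau$ has order exactly $\ell$; this holds because its order divides $\ell$ and, by your own count, it moves some line (the paper instead cites Lemma~\ref{lem:GSp-conjugacy}).
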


When $n=1$, one has $\GSp_2(\F_\ell)=\GL_2(\F_\ell)$ and $\PGSp_2(\F_\ell)=\PGL_2(\F_\ell)$. The projective action has degree $\ell+1$, and Corollary~\ref{cor:natural-actions} gives the exponent $1/(2(\ell-1))$. Thus Theorem~\ref{thm:PGSp-main} recovers, in dimension two, the exponent furnished by the one-parameter $\PGL_2$ construction of \cite{Cristante2025}.

For these two permutation representations, the comparison with the
weak form of Malle's conjecture can be made explicit.  For $n\geq 2$,
the Malle invariants for the natural vector and projective actions are,
respectively,
\begin{align*}
a_\pi\bigl(\GSp_{2n}(\F_\ell)\bigr)
&=\frac{\ell^{2n-2}(\ell^2-1)}{2},\\
a_\pi\bigl(\PGSp_{2n}(\F_\ell)\bigr)
&=\frac{(\ell+1)(\ell^{2n-2}-1)}{2}.
\end{align*}
In both cases the minimum is attained by an involution. In the natural
vector action, when $\ell=3$, a transvection also attains the minimum;
for $\ell>3$, the involution has strictly smaller index. Thus weak Malle
predicts the respective exponents
\[
\frac{2}{\ell^{2n-2}(\ell^2-1)}
\qquad\text{and}\qquad
\frac{2}{(\ell+1)(\ell^{2n-2}-1)}.
\]
Consequently, the exponents in Corollary~\ref{cor:natural-actions} are
respectively
\[
\frac{\ell+1}{4n(\ell-1)}
\qquad\text{and}\qquad
\frac{(\ell+1)(\ell^{2n-2}-1)}
{4n\ell^{2n-2}(\ell-1)}
\]
times those predicted by weak Malle.  In particular, for fixed
$n\geq2$ and $\ell\to\infty$, both ratios tend to $1/(4n)$.
When $n=1$, the corresponding Malle invariants are
$\ell(\ell-1)/2$ for the natural $\GL_2(\F_\ell)$ action on
$\F_\ell^2\setminus\{0\}$ and $(\ell-1)/2$ for the natural
$\PGL_2(\F_\ell)$ action on $\PP^1(\F_\ell)$.

\subsection{Methodology}

We use the one-parameter hyperelliptic pencil $C_T$ given by \[y^2=(x-T)f(x),\] where $m=2\ell(2n)!$, $a_i=im$ for $1\leq i\leq 2n$, and $f(x)=\prod_{i=1}^{2n}(x-a_i)$.  Set $T=1+mu$, and \[P(u):=f(1+mu)=\prod_{i=1}^{2n}(1+m(u-i)).\]
The generic geometric image on $\ell$-torsion is $\Sp_{2n}(\F_\ell)$ and the arithmetic image is $\GSp_{2n}(\F_\ell)$.  Quantitative Hilbert irreducibility shows that the specializations with smaller image have density zero. If $p\nmid m$ divides $P(u)$ exactly once, then the mod-$\ell$ inertia at $p$ is generated by a transvection. A squarefree sieve supplies a positive proportion of parameters for which all varying bad primes are of this form. Their splitting fields are distinct since the sets of primes that ramify in these fields are distinct. The tame discriminant formula gives $|\Disc K_u|\ll P(u)^{I_\pi}\ll Y^{2nI_\pi}$, where $I_\pi$ is the permutation index of a transvection. This yields Theorems~\ref{thm:PGSp-main} and~\ref{thm:GSp-main}.

\subsection{Organization}

In Section~\ref{sec:preliminaries} we recall the tame discriminant formula, prove the uniform bound needed at the fixed primes, and delineate the conjugacy properties of symplectic transvections. The generic monodromy and quantitative specialization estimates are established in Section~\ref{sec:monodromy}. The local analysis at the varying primes is carried out in Section~\ref{sec:local}, and the squarefree sieve is applied in Section~\ref{sec:sieve}. The main theorems are proved in Section~\ref{sec:proofs}. In Section~\ref{sec:actions} we compute the transvection index for natural permutation actions and prove Corollary \ref{cor:natural-actions}.

\section{Preliminaries}\label{sec:preliminaries}

\subsection{Permutation indices and discriminants}

Let $\Omega$ be a finite set of cardinality $d$, and let $\sigma\in S_\Omega$. We define the permutation index by
$\ind(\sigma)=d-\#(\operatorname{Orbit}\sigma)$. Equivalently, if the cycle lengths of $\sigma$ are $e_1,\ldots,e_r$, then $\ind(\sigma)=\sum_{j=1}^r(e_j-1)$.

Suppose that $L/\Q$ is finite Galois with group $G$, let $H\leq G$, and set $K=L^H$. The natural action of $G$ on $G/H$ is the permutation representation associated with the embeddings of $K$ into a fixed algebraic closure. We shall repeatedly use the following consequence of the conductor--discriminant formula.

\begin{lemma}[Tame discriminant formula]\label{lem:tame-disc}
Let $p$ be a prime which is tamely ramified in $L/\Q$, and suppose that its inertia group is generated by $\sigma\in G$. If $\pi$ denotes the action of $G$ on $G/H$, then $v_p(\Disc K)=\ind(\pi(\sigma))$.
\end{lemma}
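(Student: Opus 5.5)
We compute $v_p(\Disc K)$ by decomposing $p\mathcal O_K$ and using tameness prime by prime. Fix a prime $\mathfrak P$ of $L$ above $p$ whose inertia group $I=I(\mathfrak P/p)=\langle\sigma\rangle$, of order $e$ prime to $p$. The primes of $K$ above $p$ correspond to double cosets $D\backslash G/H$, where $D$ is the decomposition group of $\mathfrak P$. Each such double coset $DgH$ splits further into $I$-orbits on $G/H$. The cleanest route is to first pass to the completion: base change to $\Q_p^{\mathrm{ur}}$, or to the maximal unramified extension inside $L_{\mathfrak P}$. Since $v_p(\Disc K)$ is unchanged by unramified base change, we have
\[
K\otimes_{\Q}\widehat{\Q_p^{\mathrm{ur}}}\cong\prod_{\text{$I$-orbits }O\subset G/H}K_O,
\]
where $K_O$ is a field extension of $\widehat{\Q_p^{\mathrm{ur}}}$ of degree $|O|$. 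This is the standard description of the étale algebra $K\otimes F$ via the $\Gal(\overline F/F)$-set $\operatorname{Hom}(K,\overline{\Q}_p)\cong G/H$, restricted to the image of $\Gal(\overline F/F)$, which is $I$.

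Next we treat each orbit. Each $K_O$ is totally ramified over the unramified base, of degree $e_O=|O|$, and tamely ramified since $e_O$ divides $e$, which is prime to $p$. For a tamely totally ramified extension of degree $e_O$ the different has valuation $e_O-1$ in the uniformizer of $K_O$, so the discriminant exponent is $e_O-1$ in $p$. Summing over orbits gives
\[
v_p(\Disc K)=\sum_O(|O|-1).
\]

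Finally, the $I$-orbits on $G/H$ are exactly the orbits of the cyclic group $\langle\pi(\sigma)\rangle$, that is, the cycles of $\pi(\sigma)$. Hence the sum equals $d-\#\{\text{orbits of }\pi(\sigma)\}=\ind(\pi(\sigma))$.

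The only step requiring care is the identification of the étale algebra $K\otimes\widehat{\Q_p^{\mathrm{ur}}}$ with the $I$-set $G/H$, in particular checking that the chosen embedding $L\hookrightarrow\overline{\Q}_p$ matches the action of $G$ on embeddings. Because the final count only depends on the conjugacy class of $\pi(\sigma)$, the choice of $\mathfrak P$, which changes $\sigma$ by conjugation, is irrelevant.

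An alternative is to apply the conductor–discriminant formula: $v_p(\Disc K)$ is the Artin conductor exponent of $\operatorname{Ind}_H^G\mathbf 1$. For tame ramification this equals $\dim$ minus $\dim$ of the $I$-invariants, which is $d$ minus the number of $\langle\sigma\rangle$-orbits. This gives the same answer directly.
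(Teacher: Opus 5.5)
Your proof is correct, but your main argument takes a different route from the paper. Your closing alternative is exactly the paper's approach. The paper does no computation: it presents the lemma as a consequence of the conductor--discriminant formula and refers to Serre's \emph{Local Fields}, Ch.~VI. In that argument, $v_p(\Disc K)$ is the Artin conductor exponent of $\operatorname{Ind}_H^G\mathbf 1$. Tameness kills the higher ramification groups, so this exponent is $d-\dim(\operatorname{Ind}_H^G\mathbf 1)^{I}$. The invariants of a permutation module have dimension equal to the number of $I$-orbits.

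Your primary argument instead base changes to $F=\widehat{\Q_p^{\mathrm{ur}}}$. It identifies the factors of $K\otimes_{\Q}F$ with the $I$-orbits on $G/H$, using the Galois-set description of \'etale algebras. It then needs only the classical fact that a tamely, totally ramified extension of degree $e$ has different exponent $e-1$. This buys elementarity, since no Artin characters or ramification filtrations are needed. It also gives extra information: the cycle lengths of $\pi(\sigma)$ are exactly the ramification indices $e_v$ of the primes $v\mid p$ of $K$, each occurring $f_v$ times. The conductor route, by contrast, is a one-liner once the machinery is granted, and it extends verbatim to wild ramification through the higher ramification groups.

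The two points you flag are indeed routine. After fixing $L\hookrightarrow\overline{\Q}_p$, the $G_{\Q_p}$-action on $\operatorname{Hom}(K,\overline{\Q}_p)$ is left multiplication by the decomposition group on $G/H$, so $G_F$ acts through $I$. The discriminant exponent is preserved because $\mathcal O_K\otimes_{\Z}\mathcal O_F$ is still the maximal order of $K\otimes_{\Q}F$.
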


For further details, see \cite[Chapter VI, \S2]{SerreLocalFields}.

\begin{lemma}\label{lem:fixed-prime}
Let $K/\Q$ be a number field of degree $d$. For every prime $p$, one has \[v_p(\Disc K)\leq d-1+d\lfloor\log_p d\rfloor.\] In particular, if $S$ is a fixed finite set of primes, then there is a constant $C_{d,S}$ such that the $S$-part of $|\Disc K|$ is at most $C_{d,S}$ for every degree $d$ field $K$.
\end{lemma}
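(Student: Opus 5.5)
The bound is local: I would work in the completion $K_p=K\otimes\Q_p=\prod_i K_i$, where each $K_i/\Q_p$ has ramification index $e_i$, residue degree $f_i$, and $\sum_i e_if_i=d$. Since $v_p(\Disc K)=\sum_i f_i\, v_{\mathfrak p_i}(\mathfrak D_{K_i/\Q_p})$, where $\mathfrak D$ is the different, it suffices to show that every local extension $K_i$ satisfies
\[
v_{\mathfrak p_i}(\mathfrak D_{K_i/\Q_p})\le e_i-1+e_i\,v_p(e_i).
\]
Given this bound, we sum: $\sum_i f_i(e_i-1)\le d-1$ when at least one $e_i\ge 1$ (in fact $\sum f_i(e_i-1)=d-\sum f_i\le d-1$). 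Also $\sum_i f_ie_i v_p(e_i)\le d\max_i v_p(e_i)\le d\lfloor\log_p d\rfloor$, because $e_i\le d$ forces $p^{v_p(e_i)}\le d$. Together these give the displayed inequality.

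For the local different bound I would use the standard estimate (Serre, \emph{Local Fields}, III \S6, Remark following Prop.~13). Write $L=K_i$, $F$ for its maximal unramified subextension, and $\pi_L$ for a uniformizer. Then $\pi_L$ is a root of an Eisenstein polynomial $g(x)=x^{e}+\dots$ over $F$ with $e=e_i$, and $\mathfrak D_{L/\Q_p}=\mathfrak D_{L/F}=(g'(\pi_L))$. Each term of $g'(\pi_L)=\sum_j j\,c_j\pi_L^{j-1}$ has valuation congruent to $j-1 \pmod e$. Distinct $j$ in the range $1\le j\le e$ give distinct residues mod $e$, so the valuations are pairwise distinct. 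Hence
\[
v_L(g'(\pi_L))=\min_j v_L(j c_j\pi_L^{j-1}).
\]
This minimum is at most the valuation of the $j=e$ term, which is $e\,v_p(e)+e-1$. This gives the local bound.

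The second claim follows at once. For $p\in S$ the exponent is bounded by a quantity depending only on $d$ and $p$, so the $S$-part of $|\Disc K|$ is at most
\[
C_{d,S}=\prod_{p\in S}p^{\,d-1+d\lfloor\log_p d\rfloor}.
\]
The only delicate point is the local different bound, and in particular the claim that the valuations are distinct. It is cleanest to cite Serre directly. Alternatively, one can argue that the minimum is bounded by any single term, which is all that is needed here.
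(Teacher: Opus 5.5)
Your proposal is correct and follows the paper's proof essentially line for line. You use the same local decomposition $v_p(\Disc K)=\sum_v f_v\delta_v$, the same different bound $\delta_v\le e_v-1+e_v v_p(e_v)$ from Serre III \S6, and the same two-sum estimate; your Eisenstein-polynomial argument just unpacks that citation. One caution: the distinctness of the valuations of the terms of $g'(\pi_L)$ is genuinely needed, because the ultrametric inequality alone only gives $v_L(g'(\pi_L))\ge\min_j$, so your closing ``alternatively'' remark cannot be used to bypass it.
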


\begin{proof}
Write $K\otimes_\Q\Q_p=\prod_{v\mid p}K_v$, and let $e_v$ and $f_v$ denote the ramification index and residue degree of
$K_v/\Q_p$.  If $\delta_v$ is the exponent of the different of
$K_v/\Q_p$, then
\[\delta_v\leq e_v-1+e_vv_p(e_v);\] see
\cite[Chapter III, \S6]{SerreLocalFields}.  Consequently,
\[
 v_p(\Disc K)=\sum_{v\mid p}f_v\delta_v
 \leq
 \sum_{v\mid p}f_v(e_v-1)
 +\sum_{v\mid p}f_ve_vv_p(e_v).
\]
The first sum is
$\sum_{v\mid p}e_vf_v-\sum_{v\mid p}f_v$ and is therefore at most
$d-1$, since $\sum_{v\mid p}e_vf_v=d$ and at least one prime of $K$
lies above $p$.  For the second sum, each $e_v$ is at most $d$, so
$v_p(e_v)\leq\lfloor\log_p d\rfloor$.  It follows that $\sum_{v\mid p}f_ve_vv_p(e_v)\leq d\lfloor\log_p d\rfloor$. This proves the claimed bound.  If $S$ is fixed, multiplying the
corresponding powers of the finitely many primes in $S$ gives a constant
$C_{d,S}$ depending only on $d$ and $S$.
\end{proof}

\subsection{Symplectic transvections}

We fix a nondegenerate alternating form $\langle\ ,\ \rangle$ on a $2n$-dimensional $\F_\ell$-space $V$. If $v\in V\setminus\{0\}$ and $a\in\F_\ell^\times$, we set
\[T_{v,a}(x):=x+a\langle x,v\rangle v.\] This is a symplectic transvection. Observe that $T_{cv,a}=T_{v,ac^2}$ for $c\in\F_\ell^\times$.

Conjugacy of transvections in $\Sp(V)$ and in $\GSp(V)$ behaves slightly differently: for odd $\ell$ there may be two $\Sp(V)$-classes, whereas there is only one $\GSp(V)$-class.

\begin{lemma}\label{lem:GSp-conjugacy}
All nontrivial symplectic transvections are conjugate in $\GSp(V)$. Consequently their images are conjugate in $\PGSp(V)$. The image of a transvection in $\PGSp(V)$ has order $\ell$.
\end{lemma}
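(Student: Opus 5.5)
The plan is to reduce everything to the transvections $T_{v,a}$ and to exhibit explicit conjugating elements. First I will check that every nontrivial symplectic transvection has the form $T_{v,a}$. If $\tau\in\Sp(V)$ and $\tau-1$ has rank one, write $(\tau-1)(x)=\phi(x)w$ for a linear functional $\phi$ and a nonzero vector $w$. Expanding $\langle\tau x,\tau y\rangle=\langle x,y\rangle$ gives
\[
\phi(x)\langle w,y\rangle+\phi(y)\langle x,w\rangle+\phi(x)\phi(y)\langle w,w\rangle=0 .
\]
Since $\langle w,w\rangle=0$, this says $\phi(x)\langle w,y\rangle=\phi(y)\langle w,x\rangle$ for all $x,y$. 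Hence $\phi$ is proportional to $\langle\,\cdot\,,w\rangle$, and so $\tau=T_{w,a}$ for some $a\in\F_\ell^\times$.

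Next comes $\Sp(V)$-transitivity on nonzero vectors. For $g\in\Sp(V)$ one computes $gT_{v,a}g^{-1}=T_{gv,a}$. By Witt's theorem, or by extending $v$ and $v'$ to symplectic bases, $\Sp(V)$ acts transitively on $V\setminus\{0\}$. So every transvection is $\Sp(V)$-conjugate to $T_{v_0,a}$ for a fixed $v_0$ and some $a$. Using $T_{cv,a}=T_{v,ac^2}$, the parameter $a$ only matters modulo squares. This accounts for the (at most) two $\Sp(V)$-classes.

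The key step is to join the two square classes using similitudes. For $h\in\GSp(V)$ with multiplier $\mu$, so that $\langle hx,hy\rangle=\mu\langle x,y\rangle$, I will compute
\[
hT_{v,a}h^{-1}(x)=x+a\langle h^{-1}x,v\rangle hv=x+a\mu^{-1}\langle x,hv\rangle hv=T_{hv,a\mu^{-1}}(x).
\]
Choose a symplectic basis $e_1,\dots,e_n,f_1,\dots,f_n$ with $v_0=e_1$, and let $h$ be the map $e_i\mapsto \mu e_i$, $f_i\mapsto f_i$. This $h$ is a similitude with multiplier $\mu$ and satisfies $hv_0=\mu v_0$. Then
\[
hT_{v_0,a}h^{-1}=T_{\mu v_0,a\mu^{-1}}=T_{v_0,a\mu}.
\]
Taking $\mu$ to be a nonsquare moves $a$ into the other square class, so all transvections are $\GSp(V)$-conjugate. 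Conjugacy of the images in $\PGSp(V)$ then follows immediately by projecting.

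For the order statement, note that $T_{v,a}^k=T_{v,ka}$ because $\langle v,v\rangle=0$. Hence $T_{v,a}$ has order exactly $\ell$ in $\Sp(V)$. Its image in $\PGSp(V)$ therefore has order $1$ or $\ell$, and it suffices to show $T_{v,a}^k$ is not a scalar for $1\le k<\ell$. For such $k$, $T_{v,a}^k-1$ has rank one, while a scalar $\lambda$ satisfies $\lambda-1$ of rank $0$ or $2n\ge2$. So the image has order $\ell$.

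I expect no real obstacle here. The only thing needing care is the similitude conjugation formula, specifically making sure the multiplier appears as $\mu^{-1}$ and combines with the rescaling $hv_0=\mu v_0$ to produce the factor $\mu$ that changes the square class.
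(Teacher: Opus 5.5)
Your proof is correct and follows essentially the paper's route. Both arguments rest on the formula $gT_{v,a}g^{-1}=T_{gv,a\mu^{-1}}$ for a similitude $g$ of multiplier $\mu$, and then choose $\mu$ to adjust the scalar; the paper simply packages your $\Sp(V)$-transitivity step and your diagonal similitude into a single explicit $g$ sending a symplectic basis adapted to $v$ to one adapted to $w$.

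Two of your steps differ slightly from the paper. Your check that every symplectic transvection has the form $T_{w,a}$ fills in a step the paper leaves implicit. Your rank argument for the order of the projective image is an equally valid substitute for the paper's observation that a unipotent scalar must equal $1$.
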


\begin{proof}
Let $T_{v,a}$ and $T_{w,b}$ be two transvections. Suppose that $g\in\GSp(V)$ has multiplier $\mu$, so $\langle gx,gy\rangle=\mu\langle x,y\rangle$. A direct calculation gives
$gT_{v,a}g^{-1}=T_{gv,a\mu^{-1}}$.

Choose symplectic bases
\[
v,v^*,v_2,v_2^*,\ldots,v_n,v_n^*
\qquad\text{and}\qquad
w,w^*,w_2,w_2^*,\ldots,w_n,w_n^*
\]
of $V$, normalized so that
$\langle v,v^*\rangle=\langle w,w^*\rangle=1$.
For $c,\mu\in\F_\ell^\times$, define $g\in\GSp(V)$ by
\[
g(v)=cw\quad\text{and}\quad g(v^*)=\mu c^{-1}w^*,
\]
and, for $j\geq2$, by
\[
g(v_j)=w_j\quad\text{and}\quad g(v_j^*)=\mu w_j^*.
\]
Then $g$ has multiplier $\mu$. Using
$gT_{v,a}g^{-1}=T_{gv,a\mu^{-1}}$, we obtain
$gT_{v,a}g^{-1}=T_{w,a\mu^{-1}c^2}$.
Taking $\mu=ac^2/b$ gives $gT_{v,a}g^{-1}=T_{w,b}$.
Thus any two nontrivial symplectic transvections are conjugate in
$\GSp(V)$.

Conjugacy of the projective images follows immediately. Finally, a transvection has order $\ell$. If a nontrivial power of it were scalar, that scalar would be a unipotent scalar and hence equal to $1$, forcing the power itself to be the identity. Thus its image modulo the scalar centre still has order $\ell$.
\end{proof}

\subsection{A general counting result}
The result below will be applied first to $\GSp$ and then to $\PGSp$.

\begin{proposition}\label{prop:counting-principle}
Let $G$ be a finite group, let $\tau\in G$ have prime order $\ell$, and let $P\in\Z[T]$ have degree $r\geq1$. Suppose that $S$ is a fixed finite set of primes containing $\ell$ and that, for all sufficiently large $Y$, there are $\gg Y$ integers $u$ with $1\leq u\leq Y$ with the following properties.
\begin{enumerate}[label=\textup{(\roman*)}]
\item $P(u)$ is positive and squarefree, no prime in $S$ divides $P(u)$, and the values $P(u)$ occurring in the chosen set are pairwise distinct;
\item there is a Galois extension $L_u/\Q$ with $\Gal(L_u/\Q)\simeq G$;
\item for every prime $p\notin S$, the extension $L_u/\Q$ is ramified at $p$ if and only if $p\mid P(u)$, and in that case the inertia group is cyclic of order $\ell$ and is generated by an element conjugate to $\tau$.
\end{enumerate}
Then, for every faithful transitive permutation representation $\pi$ of $G$ in $S_d$,
\[
 N_\pi(G;X)\gg_{G,\pi,P,S}X^{1/(r\,\ind(\pi(\tau)))}.
\]
\end{proposition}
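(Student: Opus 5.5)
Fix a faithful transitive $\pi:G\to S_d$. Choose a subgroup $H\le G$ whose coset action is equivalent to $\pi$ (a point stabilizer), and for each chosen $u$ set $K_u=L_u^H$. Then $K_u$ has degree $d$, its Galois closure is $L_u$ because $\pi$ is faithful so the core of $H$ is trivial, $\Gal(L_u/\Q)\simeq G$, and the permutation representation attached to $K_u$ is $\pi$. Each $K_u$ is therefore counted by $N_\pi(G;X)$. The argument has two parts: bound $|\Disc K_u|$ polynomially in $Y$, and show that the map $u\mapsto K_u$ is injective, or at least has uniformly bounded fibres.

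For the discriminant, split into primes in $S$ and primes outside $S$. By Lemma~\ref{lem:fixed-prime}, the $S$-part of $|\Disc K_u|$ is at most $C_{d,S}$. Now take $p\notin S$. By (iii), $p$ ramifies in $L_u$ only if $p\mid P(u)$. In that case the inertia group has order $\ell$ and $p\neq\ell$ (since $\ell\in S$), so the ramification is tame. The inertia group is generated by a conjugate of $\tau$, and conjugation does not change cycle type, so Lemma~\ref{lem:tame-disc} gives $v_p(\Disc K_u)=\ind(\pi(\tau))$. Primes unramified in $L_u$ are unramified in $K_u$. Since $P(u)$ is squarefree and coprime to $S$,
\[
|\Disc K_u|\le C_{d,S}\,P(u)^{\ind(\pi(\tau))}\ll_{P,d,S} Y^{r\,\ind(\pi(\tau))},
\]
using $0<P(u)\ll Y^r$ for $1\le u\le Y$.

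For distinctness, (iii) shows that the set of primes outside $S$ ramified in $K_u$ is exactly the set of prime divisors of $P(u)$. We need that $\pi(\tau)\neq 1$, which holds by faithfulness of $\pi$, so $\ind(\pi(\tau))\ge1$. Since $P(u)$ is positive and squarefree, it equals the product of these primes, because every prime divisor of $P(u)$ lies outside $S$. So $K_u$ determines $P(u)$. By (i) the values $P(u)$ in the chosen set are pairwise distinct, hence the fields $K_u$ are pairwise distinct (not merely up to conjugacy: distinct ramification sets rule out isomorphism).

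Now set $Y=(X/C)^{1/(r\,\ind(\pi(\tau)))}$ for a suitable constant $C$. This produces $\gg Y$ distinct fields, each with discriminant at most $X$, and so $N_\pi(G;X)\gg X^{1/(r\,\ind(\pi(\tau)))}$. The only mildly delicate points are checking that $\pi$ really is the permutation representation attached to $K_u$, and checking that the ramification set recovers $P(u)$. Neither is a real obstacle; the argument is essentially bookkeeping.
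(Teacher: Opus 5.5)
Your proposal is correct and follows the paper's proof essentially step for step. Like the paper, you take $K_u=L_u^H$ for a point stabilizer $H$, bound the $S$-part of the discriminant by Lemma~\ref{lem:fixed-prime} and the rest by $P(u)^{\ind(\pi(\tau))}$ via the tame discriminant formula, distinguish the fields by their ramification support outside $S$ (which recovers the squarefree value $P(u)$), and finish with $Y\asymp X^{1/(r\,\ind(\pi(\tau)))}$.
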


\begin{proof}
Let $H$ be the stabilizer of a point of the $G$-set underlying $\pi$, and
put $K_u=L_u^H$.  Since $\pi$ is faithful, the core
$\bigcap_{g\in G}gHg^{-1}$ is trivial.  Hence the normal closure of
$K_u/\Q$ inside $L_u$ is all of $L_u$, and the action of
$\Gal(L_u/\Q)$ on the embeddings of $K_u$ is permutation-isomorphic to
$\pi$.  In particular, every $K_u$ is one of the fields counted by
$N_\pi(G;X)$ once its discriminant is at most $X$.

Set $I_\pi=\ind(\pi(\tau))$.  Let $p\notin S$ divide $P(u)$.  By
hypothesis (iii), inertia at $p$ is generated by an element conjugate to
$\tau$.  Since $p\neq\ell$, this ramification is tame, and
Lemma~\ref{lem:tame-disc} gives $v_p(\Disc K_u)=I_\pi$. The number $I_\pi$ is positive. If it were zero, $\pi(\tau)$ would be
the identity, contradicting the faithfulness of $\pi$.  If
$p\notin S$ does not divide $P(u)$, hypothesis (iii) says that $p$ is
unramified.  Since $P(u)$ is squarefree and is prime to the primes in
$S$, the contribution to the discriminant from primes outside $S$ is
therefore exactly $P(u)^{I_\pi}$. By Lemma~\ref{lem:fixed-prime}, the contribution from the primes in $S$ to the
discriminant of any degree-$d$ field is bounded by a constant depending
only on $d$ and $S$.  We obtain $|\Disc K_u|\ll_{d,S}P(u)^{I_\pi}$. As $P$ has degree $r$, there is a constant depending only on $P$ such
that $|P(u)|\ll_PY^r$ for $1\leq u\leq Y$.  Hence
$|\Disc K_u|\ll_{P,d,S}Y^{rI_\pi}$.

It remains to count distinct fields.  At every prime outside $S$ which
divides $P(u)$, the inertia generator has nontrivial image under $\pi$,
so the degree-$d$ field $K_u$ is ramified there.  Thus its ramification
support outside $S$ is exactly the prime support of $P(u)$.  Two
positive squarefree integers have the same prime support if and only if
they are equal.  Hypothesis (i) therefore implies that distinct chosen
values of $u$ give fields with different ramification supports, and
hence nonisomorphic fields.  We have consequently produced $\gg Y$
distinct fields of discriminant $\ll Y^{rI_\pi}$.  Taking
$Y=cX^{1/(rI_\pi)}$ with $c>0$ sufficiently small proves the result.
\end{proof}

\section{The hyperelliptic pencil and its generic monodromy}\label{sec:monodromy}

Fix $n\geq1$ and an odd prime $\ell$. Set
$m=2\ell(2n)!$, put $a_i=im$ for $1\leq i\leq2n$, and let $f(x)=\prod_{i=1}^{2n}(x-a_i)$. Let $\mathcal U=\mathbf A^1_\Q\setminus\{a_1,\ldots,a_{2n}\}$. Over $\mathcal U$ we take the smooth proper model of the affine family $\mathcal C_T$ given by $y^2=(x-T)f(x)$. Since the polynomial on the right has degree $2n+1$, every fibre over $\mathcal U$ is a smooth hyperelliptic curve of genus $n$. Let $\mathcal J\to\mathcal U$ denote the relative Jacobian. The principal polarization and the Weil pairing give a representation \[\rho_{\ell}: \pi_1(\mathcal U)\rightarrow \GSp_{2n}(\F_\ell).\] We write $G^{\mathrm{geom}}_\ell$ for the image of the geometric fundamental group and $G^{\mathrm{arith}}_\ell$ for the full arithmetic image.

\begin{lemma}\label{lem:fixed-primes}
Every prime divisor of $2\ell\Disc(f)$ divides $m$. If $T=1+mu$ with $u\in\Z$ and
$P(u)=f(1+mu)$, then $\gcd(P(u),m)=1$.
\end{lemma}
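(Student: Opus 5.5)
The plan is to compute $\Disc(f)$ explicitly and then reduce $P(u)$ modulo each prime dividing $m$.

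First I compute the discriminant of $f$. Since $f(x)=\prod_{i=1}^{2n}(x-im)$ has roots $a_i=im$,
\[
\Disc(f)=\prod_{i<j}(a_i-a_j)^2=\prod_{1\le i<j\le 2n} m^2 (j-i)^2 .
\]
Let $p$ be a prime dividing this product. Then $p$ divides $m$ or $p$ divides some difference $j-i$. In the second case $1\le j-i\le 2n-1$, so $j-i$ divides $(2n)!$, and hence $p$ divides $(2n)!$, which divides $m$. The primes $2$ and $\ell$ divide $m=2\ell(2n)!$ by construction. So every prime divisor of $2\ell\Disc(f)$ divides $m$.

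Next I prove the coprimality claim. As already recorded in the introduction,
\[
P(u)=f(1+mu)=\prod_{i=1}^{2n}\bigl(1+mu-im\bigr)=\prod_{i=1}^{2n}\bigl(1+m(u-i)\bigr).
\]
Each factor is congruent to $1$ modulo $m$. Hence $P(u)\equiv1\pmod m$, and so $\gcd(P(u),m)=1$.

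There is no real obstacle here. The only point needing care is justifying that each difference $j-i$ contributes only primes dividing $(2n)!$, which follows from $j-i<2n$.
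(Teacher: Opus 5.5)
Your proposal is correct and follows essentially the same argument as the paper: you factor $\Disc(f)$ as a product of $m^2(j-i)^2$ with $0<j-i<2n$, and you observe that each linear factor $1+m(u-i)$ of $P(u)$ is $\equiv 1$. The only cosmetic difference is that you reduce modulo $m$ directly, whereas the paper reduces modulo each prime $p\mid m$; the two are equivalent.
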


\begin{proof}
Up to sign, $\Disc(f)$ is the product of $(a_i-a_j)^2$ over $i<j$. Since $a_i-a_j=m(i-j)$ and every prime divisor of $i-j$ is at most $2n$, every prime dividing $\Disc(f)$ divides $m$. The primes $2$ and $\ell$ also divide $m$ by definition.

For the second assertion, if $p\mid m$, then each factor $1+m(u-i)$ is congruent to $1$ modulo $p$. Thus no prime divisor of $m$ can divide $P(u)$.
\end{proof}

We next prove the generic monodromy statement in the exact form needed later. Hall's theorem applies to the one-parameter pencil used here. We first compare the geometric monodromy in characteristic zero with the
monodromy obtained after reduction at a suitable auxiliary prime.

\begin{lemma}\label{lem:special-fibre-monodromy}
Let $\ell$ be an odd prime and choose a rational prime
$q>\max\{m,|\GSp_{2n}(\F_\ell)|\}$.  Let $\overline f$ denote the
reduction of $f$ modulo $q$.  Then $\overline f$ is squarefree of
degree $2n$, and the geometric monodromy on the $\ell$-torsion of the
Jacobian of
\[
y^2=(x-T)\overline f(x)
\]
over $\overline{\F}_q(T)$ is $\Sp_{2n}(\F_\ell)$.
\end{lemma}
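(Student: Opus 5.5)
The plan is to reduce to Hall's theorem on big symplectic monodromy: C. Hall, ``Big symplectic or orthogonal monodromy modulo $\ell$''. That theorem applies to a family $y^2=(x-T)h(x)$ over a field $k$ of characteristic $q$ odd with $h$ squarefree. It says that if $q$ is prime to $\ell$ and the mod-$\ell$ monodromy contains a transvection, the geometric monodromy on $\ell$-torsion is the full $\Sp_{2n}(\F_\ell)$. It needs $\ell$ odd and the usual tameness hypothesis $q>2n+1$ or a similar condition.

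The first step is the squarefreeness claim. The discriminant of $f$ is, up to sign, $\prod_{i<j}m^2(i-j)^2$. Every prime factor of this divides $m$ by Lemma~\ref{lem:fixed-primes}. Since $q>m$, we have $q\nmid\Disc(f)$, so $\overline f$ has $2n$ distinct roots $\overline{a_i}$ in $\F_q$. It has degree $2n$ because $f$ is monic. In particular $(x-T)\overline f(x)$ is squarefree of odd degree $2n+1$ over $\overline{\F}_q(T)$, so the generic fibre is a smooth hyperelliptic curve of genus $n$. The family is smooth over $\mathbf A^1\setminus\{\overline{a_1},\dots,\overline{a_{2n}}\}$.

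The second step is the local monodromy at a bad point $T=\overline{a_i}$. There two roots of the branch polynomial collide and the rest stay distinct, so the fibre acquires a single node. This is the standard Picard--Lefschetz situation. The local monodromy on $H^1$, and hence on $J[\ell]$, is a symplectic transvection, because the vanishing cycle is nonzero mod $\ell$. Tameness holds since $q>|\GSp_{2n}(\F_\ell)|$ and $q\ne\ell$, so inertia acts through its tame quotient of order prime to $q$. This gives a transvection in the geometric monodromy group. The hypothesis $q>|\GSp_{2n}(\F_\ell)|$ also ensures that the monodromy group has order prime to $q$. This is exactly what Hall's theorem (or the Katz--Sarnak-style arguments it uses) needs to pass between tame fundamental groups and characteristic-zero topology.

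The third step, which I expect to be the main obstacle, is to verify that the hypotheses of Hall's theorem hold in exactly this form. Hall requires an irreducible family with at least one point of ``transvection type'' degeneration, here supplied by step two. He then proves that the monodromy is generated by transvections and acts irreducibly on $J[\ell]$. It follows that the group is $\Sp_{2n}(\F_\ell)$, by the classification of irreducible subgroups of $\Sp_{2n}(\F_\ell)$ generated by transvections ($\ell$ odd). As a fallback, one could argue directly via the braid-group picture. The pencil moving one root $T$ around the fixed roots $\overline{a_i}$ gives, by tame specialization from characteristic $0$ (valid since $q$ is prime to the monodromy order), the same group as over $\mathbf C$. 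There the transvections $T_{\delta_i}$ attached to the vanishing cycles $\delta_i$ joining $T$ to $a_i$ are available. Their vanishing cycles span $H_1(C,\F_\ell)$ and form a connected intersection graph. By the theorem of McLaughlin on groups generated by transvections, they generate $\Sp_{2n}(\F_\ell)$. Finally, the geometric monodromy lies in $\Sp$ because the multiplier is the cyclotomic character, which is trivial over $\overline{\F}_q$. This gives equality.
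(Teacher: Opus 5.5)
Your proposal is correct and follows the paper's route. You show $\overline f$ is squarefree because every prime factor of $\Disc(f)$ divides $m<q$, and the monodromy then comes from Hall's theorem for the pencil $y^2=(T-x)\overline f(x)$ \cite[Theorem~4.1]{HallBig2008}; the sign change $x-T\leftrightarrow T-x$ is harmless over $\overline{\F}_q$ after rescaling $y$ by $\sqrt{-1}$. Your Steps~2--3 and the Picard--Lefschetz/McLaughlin fallback re-derive what is already inside Hall's hyperelliptic theorem, but cite that hyperelliptic form, valid for every odd $\ell\neq q$, rather than the general ``one transvection'' criterion, which by itself only gives full monodromy for $\ell$ outside a finite exceptional set.
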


\begin{proof}
Since $q>m$, we have $q\nmid m$.  In particular $q$ is odd,
$q\neq\ell$, and, by Lemma~\ref{lem:fixed-primes}, $q$ does not divide
$\Disc(f)$.  We can also see the last assertion directly.  If two of
the fixed branch points had the same reduction modulo $q$, say
$a_i\equiv a_j\pmod q$, then $q$ would divide
$a_i-a_j=m(i-j)$.  Since $q\nmid m$, this would imply $q\mid i-j$.
This is impossible because $0<|i-j|<2n<q$.  Thus the reductions of
$a_1,\ldots,a_{2n}$ are pairwise distinct, and $\overline f$ is monic
and squarefree of degree $2n$.

We may now apply Hall's theorem.  In the notation of
\cite[Theorem~4.1]{HallBig2008}, take the function field
$K=\F_q(T)$ and the hyperelliptic curve with affine equation
$y^2=(T-x)\overline f(x)$.  Hall proves, for every odd prime $\ell$
different from the characteristic, that after adjoining the
$\ell$th roots of unity the $\ell$-torsion extension is geometric
with Galois group $\Sp_{2n}(\F_\ell)$.  Equivalently, the image of the
geometric fundamental group on the $\ell$-torsion is the full
symplectic group.

Our equation has $x-T$ in place of $T-x$.  This does not change the
geometric monodromy.  Indeed, over $\overline{\F}_q$ we may choose a
square root of $-1$, and multiplication of the $y$-coordinate by this
square root gives an isomorphism between the two curves.  Their
geometric $\ell$-torsion local systems are therefore isomorphic.
Hence the geometric image for our reduced family is
$\Sp_{2n}(\F_\ell)$.
\end{proof}

We next compare the geometric monodromy of the characteristic-zero
family with that of its reduction modulo $q$.

\begin{lemma}\label{lem:specialization-monodromy}
With $q$ chosen as in Lemma~\ref{lem:special-fibre-monodromy}, the
geometric monodromy group of the characteristic-zero family is equal,
up to conjugacy, to the geometric monodromy group of its reduction
modulo $q$.  In particular,
$G^{\mathrm{geom}}_\ell=\Sp_{2n}(\F_\ell)$.
\end{lemma}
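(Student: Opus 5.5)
The plan is to realize both monodromy groups as images of tame fundamental groups of a single family over a base of mixed characteristic, and to compare them through the specialization map for tame fundamental groups (SGA1, Exp.~XIII).

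\textbf{Step 1: spread out.} Let $R=\Z_{(q)}$, or its strict henselization with fraction field $K_0$ and residue field $\overline{\F}_q$. Put $D=\{a_1,\ldots,a_{2n}\}\subset\PP^1_R$ together with the point at infinity. Since $q>m$ and the $a_i$ stay distinct modulo $q$ (Lemma~\ref{lem:special-fibre-monodromy}), $D$ is a relative normal crossings divisor, in fact a disjoint union of sections, in $\PP^1_R$. Over $\mathcal U_R=\PP^1_R\setminus D$, the curve $y^2=(x-T)f(x)$ is smooth proper of genus $n$, because $q$ is odd and the branch points remain distinct in every fibre. Its relative Jacobian $\mathcal J_R$ is an abelian scheme, and $\mathcal J_R[\ell]$ is finite étale since $q\ne\ell$. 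This gives a local system of $\F_\ell$-modules on $\mathcal U_R$, and hence a monodromy representation of $\pi_1(\mathcal U_R)$ that restricts to the characteristic-zero family and to its reduction modulo $q$.

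\textbf{Step 2: tameness along $D$.} On the special fibre, the local monodromy of $\mathcal J[\ell]$ around each point of $D$ factors through a quotient of order dividing $|\GSp_{2n}(\F_\ell)|$, which is prime to $q$ because $q>|\GSp_{2n}(\F_\ell)|$. So the étale cover $\mathcal U'_R\to\mathcal U_R$ trivializing $\mathcal J_R[\ell]$ is tamely ramified along $D$. Its degree divides $|\GSp_{2n}(\F_\ell)|$, so Abhyankar's lemma applies. A geometrically connected component of this cover, over both the geometric generic and the geometric special fibre, then corresponds to an open subgroup of the tame fundamental group. Grothendieck's specialization theorem for the tame fundamental group of the complement of a relative normal crossings divisor in a smooth proper curve gives a surjection
\[
\pi_1(\mathcal U_{\overline{K}_0})\twoheadrightarrow \pi_1^{t}(\mathcal U_{\overline{K}_0})\xrightarrow{\ \sim\ }\pi_1^{t}(\mathcal U_{\overline{\F}_q}).
\]
It is an isomorphism on maximal prime-to-$q$ quotients, and it is compatible with the representation on $\mathcal J[\ell]$. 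The images of these groups in $\GSp_{2n}(\F_\ell)$ are finite of order prime to $q$, so they agree after choosing compatible base points, that is, up to conjugacy. Finally, the geometric monodromy over $\overline{K}_0$ coincides with that over $\overline{\Q}$, since $\pi_1$ of a variety over an algebraically closed field in characteristic zero is invariant under algebraically closed extension. Combining this with Lemma~\ref{lem:special-fibre-monodromy} gives $G^{\mathrm{geom}}_\ell=\Sp_{2n}(\F_\ell)$.

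\textbf{Main obstacle.} The delicate point is justifying that the specialization map identifies the images, not merely that one image surjects onto the other. I would phrase this via Galois covers: the $\ell$-torsion cover has finite group of order prime to $q$ and is tame. By the tame specialization theorem it extends over $R$, and its geometric generic and special fibres have the same number of connected components with isomorphic stabilizers. Hence the geometric Galois groups, which are the monodromy images, coincide. The degree bound on $q$ is exactly what guarantees the tameness needed here.
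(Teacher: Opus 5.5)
Your proposal is correct and follows essentially the same route as the paper. Both spread the pencil out over (the strict henselization of) $\Z_{(q)}$ with $a_1,\ldots,a_{2n},\infty$ as a relative normal crossings boundary, apply the tame specialization theorem of SGA1 Exp.~XIII, and use $q>|\GSp_{2n}(\F_\ell)|$ to see that both $\ell$-torsion representations are tame, factor through the prime-to-$q$ quotients, and so have the same image up to conjugacy. One small slip: in your display the map $\pi_1^{t}(\mathcal U_{\overline{K}_0})\to\pi_1^{t}(\mathcal U_{\overline{\F}_q})$ should be a surjection, not an isomorphism; it is an isomorphism only on maximal prime-to-$q$ quotients, as your next sentence correctly says.
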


\begin{proof}
Consider the relative projective line
$\PP^1_{\Z_{(q)}}$ and remove the sections
$a_1,\ldots,a_{2n},\infty$.  We denote the resulting open subscheme by
$\mathscr U$.  Since the $a_i$ remain pairwise distinct modulo $q$,
these sections form a relative normal-crossings divisor in
$\PP^1_{\Z_{(q)}}$.

The hyperelliptic pencil extends over $\mathscr U$. The smooth
projective models of $y^2=(x-T)f(x)$ form a smooth proper family
$\mathscr C\rightarrow\mathscr U$.  Let
$\mathscr J\rightarrow\mathscr U$ be its relative Jacobian.  This is
an abelian scheme of relative dimension $n$.  Since $q\neq\ell$, the
group scheme $\mathscr J[\ell]$ is finite etale over $\mathscr U$.

Replace
$\Z_{(q)}$ by its strict henselization at $q$, and let
$\overline\eta$ and $\overline s$ denote geometric generic and
geometric special points.  Write
$\mathscr U_{\overline\eta}$ and $\mathscr U_{\overline s}$ for the
corresponding geometric fibres. For this smooth relative curve with relative normal-crossings boundary,
the specialization theorem for tame fundamental groups gives a
surjective homomorphism
\[
\operatorname{sp}\colon
\pi_1^{t}(\mathscr U_{\overline\eta})
\longrightarrow
\pi_1^{t}(\mathscr U_{\overline s})
\]
which induces an isomorphism on the maximal prime-to-$q$ quotients;
see \cite[Expos\'e XIII, \S2.10 and Corollaire~2.12]{SGA1}. In the present setting it is enough to work with the maximal prime-to-$q$ quotient.
Indeed, both geometric monodromy groups are subgroups of
$\GSp_{2n}(\F_\ell)$, and our choice of $q$ gives
$q>|\GSp_{2n}(\F_\ell)|$.  Their orders are therefore prime to $q$.
On the special fibre this also means that wild inertia, which is a
pro-$q$ group, acts trivially.  Hence the two $\ell$-torsion
representations factor through the maximal prime-to-$q$ quotients of
the corresponding tame fundamental groups. Since specialization is an isomorphism
on the prime-to-$q$ quotients through which they factor, their images
are the same, up to conjugacy.

By Lemma~\ref{lem:special-fibre-monodromy}, the image on the geometric
special fibre is $\Sp_{2n}(\F_\ell)$.  It follows that the
characteristic-zero geometric image is also
$\Sp_{2n}(\F_\ell)$.
\end{proof}

We can now determine the full arithmetic image.

\begin{proposition}\label{prop:generic-image}
For every odd prime $\ell$ one has
$G^{\mathrm{geom}}_\ell=\Sp_{2n}(\F_\ell)$ and
$G^{\mathrm{arith}}_\ell=\GSp_{2n}(\F_\ell)$.
\end{proposition}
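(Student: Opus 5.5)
The geometric statement $G^{\mathrm{geom}}_\ell=\Sp_{2n}(\F_\ell)$ is exactly the conclusion of Lemma~\ref{lem:specialization-monodromy}, so I would simply quote it. The remaining work is the arithmetic image. The plan is to sandwich $G^{\mathrm{arith}}_\ell$ between $\Sp_{2n}(\F_\ell)$ and $\GSp_{2n}(\F_\ell)$, and then show that the multiplier map is surjective onto $\F_\ell^\times$.

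First, $\rho_\ell$ lands in $\GSp_{2n}(\F_\ell)$ by construction, since the Weil pairing attached to the principal polarization is preserved up to a scalar. Second, the geometric fundamental group $\pi_1(\mathcal U_{\overline\Q})$ is normal in $\pi_1(\mathcal U)$, with quotient $\Gal(\overline\Q/\Q)$. Hence $G^{\mathrm{geom}}_\ell=\Sp_{2n}(\F_\ell)$ is a normal subgroup of $G^{\mathrm{arith}}_\ell$, and
\[
\Sp_{2n}(\F_\ell)\leq G^{\mathrm{arith}}_\ell\leq\GSp_{2n}(\F_\ell).
\]
Since $\Sp_{2n}(\F_\ell)$ is the kernel of the multiplier $\mu:\GSp_{2n}(\F_\ell)\to\F_\ell^\times$, it suffices to show that $\mu\circ\rho_\ell$ is surjective on $\pi_1(\mathcal U)$.

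For surjectivity, I would use the compatibility of the Weil pairing with Galois action. The pairing $e_\ell:\mathcal J[\ell]\times\mathcal J[\ell]\to\mu_\ell$ is a perfect alternating pairing, equivariant for $\pi_1(\mathcal U)$. Therefore $\mu\circ\rho_\ell$ is the mod-$\ell$ cyclotomic character, pulled back along
\[
\pi_1(\mathcal U)\to\Gal(\overline\Q/\Q).
\]
This map is surjective, because $\mathcal U$ is geometrically connected over $\Q$; concretely, a rational point $u_0\in\mathcal U(\Q)$ supplies a section. The mod-$\ell$ cyclotomic character of $\Gal(\overline\Q/\Q)$ surjects onto $(\Z/\ell\Z)^\times=\F_\ell^\times$, by irreducibility of cyclotomic polynomials. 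Hence $\mu(G^{\mathrm{arith}}_\ell)=\F_\ell^\times$, and so $G^{\mathrm{arith}}_\ell=\GSp_{2n}(\F_\ell)$.

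The main point needing care is the identification of the multiplier with the cyclotomic character. In particular, one should normalize the symplectic basis of $\mathcal J[\ell]$ so that $\mu_\ell$ is identified with $\F_\ell$ via a fixed primitive root. Once that is done, the multiplier of $\rho_\ell(g)$ is by definition the scalar by which $g$ acts on $\mu_\ell$. Everything else is formal group theory from the exact sequence $1\to\Sp\to\GSp\xrightarrow{\mu}\F_\ell^\times\to1$.
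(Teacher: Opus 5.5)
Your proposal is correct and follows essentially the same route as the paper. Both arguments quote Lemma~\ref{lem:specialization-monodromy} for the geometric image, identify the multiplier with the mod-$\ell$ cyclotomic character via the Weil pairing, and then conclude from the surjectivity of $\pi_1(\mathcal U)\to G_\Q$ together with the exact sequence $1\to\Sp_{2n}(\F_\ell)\to\GSp_{2n}(\F_\ell)\to\F_\ell^\times\to1$. Your remark that a rational point of $\mathcal U$ (for instance $u_0=0$) supplies a section is just a more concrete justification of the surjectivity the paper reads off from the homotopy exact sequence.
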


\begin{proof}
The geometric assertion follows from
Lemma~\ref{lem:specialization-monodromy}.  We therefore only have to pass
from geometric to arithmetic monodromy.

The principal polarization on the relative Jacobian gives the Weil
pairing on $\mathcal J[\ell]$.  Compatibility of the Galois action
with this pairing shows that the multiplier of $\rho_\ell$ is the
mod-$\ell$ cyclotomic character.  On the geometric fundamental group
the roots of unity are constant, so the multiplier is trivial.  This
also explains directly why the geometric image is contained in
$\Sp_{2n}(\F_\ell)$.

Since $\mathcal U$ is geometrically connected, its arithmetic and
geometric fundamental groups fit into the usual exact sequence
\[
1\longrightarrow
\pi_1(\mathcal U_{\overline{\Q}})
\longrightarrow
\pi_1(\mathcal U)
\longrightarrow
G_\Q
\longrightarrow1.
\]
The image of the first term is
$G^{\mathrm{geom}}_\ell=\Sp_{2n}(\F_\ell)$.  Hence
$\Sp_{2n}(\F_\ell)$ is contained in
$G^{\mathrm{arith}}_\ell$.

On the other hand, the mod-$\ell$ cyclotomic character
$G_\Q\rightarrow\F_\ell^\times$ is surjective.  Since
$\pi_1(\mathcal U)\rightarrow G_\Q$ is also surjective, the multiplier
maps $G^{\mathrm{arith}}_\ell$ onto $\F_\ell^\times$. The 
sequence
\[
1\longrightarrow
\Sp_{2n}(\F_\ell)
\longrightarrow
\GSp_{2n}(\F_\ell)
\longrightarrow
\F_\ell^\times
\longrightarrow1
\]
is exact.  It follows that
$G^{\mathrm{arith}}_\ell=\GSp_{2n}(\F_\ell)$.
\end{proof}

We now make the linear change of variable $T=1+mU$. Since this is an automorphism of the rational function field, the generic torsion extension over $\Q(U)$ still has Galois group $\GSp_{2n}(\F_\ell)$. For an integer $u$, let $C_u$ be given by $y^2=(x-(1+mu))f(x)$, put $J_u=\Jac(C_u)$, and let $L_u=\Q(J_u[\ell])$.

\begin{proposition}\label{prop:HIT}
Let $\mathcal E(Y)$ be the set of integers $1\leq u\leq Y$ for which
$\Gal(L_u/\Q)$ is a proper subgroup of $\GSp_{2n}(\F_\ell)$. Then $\#\mathcal E(Y)\ll_{n,\ell}Y^{1/2}\log Y$.
\end{proposition}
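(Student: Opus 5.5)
We deduce Proposition~\ref{prop:HIT} from a quantitative form of Hilbert's irreducibility theorem applied to the generic $\ell$-torsion extension over $\Q(U)$. By Proposition~\ref{prop:generic-image} and the change of variable $T=1+mU$, the field $\mathcal L=\Q(U)(\mathcal J[\ell])$ is Galois over $\Q(U)$ with group $G^+=\GSp_{2n}(\F_\ell)$. For an integer $u$ outside the finitely many $u$ with $1+mu\in\{a_1,\ldots,a_{2n}\}$ (in fact there are none, by Lemma~\ref{lem:fixed-primes}, since $P(u)\neq0$), the fibre $J_u$ has good specialization. Also $L_u=\Q(J_u[\ell])$ is the specialization of $\mathcal L$ at $U=u$, so $\Gal(L_u/\Q)$ is identified, up to conjugacy, with a decomposition group $D_u\leq G^+$.

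We reduce the condition $D_u\neq G^+$ to a statement about rational points on finitely many covers. Fix a primitive element and let $F(U,Z)\in\Z[U][Z]$ be its minimal polynomial, of degree $|G^+|$. For each maximal subgroup $M<G^+$, the fixed field $\mathcal L^M$ is a degree $[G^+:M]\ge2$ extension of $\Q(U)$. It corresponds to a geometrically irreducible curve $Y_M\to\PP^1_U$ of degree $[G^+:M]$: geometric irreducibility holds because the geometric monodromy $\Sp_{2n}(\F_\ell)$ surjects onto $G^+/M$ as a transitive set. Indeed, $\Sp\cdot M=G^+$ unless $M\supseteq\Sp$, and in that case the cover is a constant field extension. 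If $D_u$ is proper, then $D_u\subseteq M$ for some maximal $M$ up to conjugacy, and hence $u$ lifts to a $\Q$-rational point of $Y_M$, or of its normalization, outside finitely many branch fibres.

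\textbf{Case 1: $M\supseteq\Sp_{2n}(\F_\ell)$.} Then $\mathcal L^M=\Q(\zeta_\ell)^{\mathrm{something}}(U)$ is constant. However, the multiplier of $\rho_{\ell}$ restricted to the specialization is the cyclotomic character, which is surjective for every $u$. So $D_u\not\subseteq M$, and these $M$ contribute nothing.

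\textbf{Case 2: $Y_M$ geometrically irreducible of degree $\ge2$.} We count integers $u\le Y$ in the image of $Y_M(\Q)$. The standard bound, namely Cohen's or Serre's quantitative Hilbert irreducibility (Serre, \emph{Topics in Galois Theory}, \S3.4 / \emph{Lectures on the Mordell--Weil theorem}, \S13), states that the number of integral $u$ with $|u|\le Y$ that are images of rational points of an irreducible cover of degree $\ge2$ is $\ll Y^{1/2}$. Using the large-sieve version, or Cohen's bound $\ll Y^{1/2}\log Y$ in the thin-set form, gives exactly $Y^{1/2}\log Y$.

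Summing over the finitely many conjugacy classes of maximal subgroups, whose number depends only on $n,\ell$, gives $\#\mathcal E(Y)\ll_{n,\ell}Y^{1/2}\log Y$. In practice I would simply cite Cohen's theorem: the set of $u$ with $\Gal(F(u,Z))\neq G^+$ has size $\ll_F Y^{1/2}\log Y$ for $|u|\le Y$. The only thing to verify is that the specialized polynomial's Galois group coincides with $\Gal(L_u/\Q)$ for all but $O(1)$ values of $u$ (those with vanishing discriminant of $F(u,Z)$ or leading coefficient). This holds since the splitting field of $F(u,Z)$ is contained in $L_u$, has Galois group $D_u$, and is all of $L_u$ when $F(u,Z)$ is separable.

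The main obstacle is this identification of $\Gal(L_u/\Q)$ with the Hilbert specialization group; that is, we must check that specialization of the torsion field commutes with specialization of the function-field extension. I would handle it by noting that $\mathcal J[\ell]$ is finite étale over $\mathcal U$. For $u\in\mathcal U(\Q)$, the fibre of the corresponding Galois cover at $u$ is $\operatorname{Spec}L_u$-type with Galois group $D_u$, and this is precisely the decomposition group in Cohen's setting.
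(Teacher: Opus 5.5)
Your proposal is correct and follows essentially the same route as the paper. Both apply the Cohen--Serre quantitative Hilbert irreducibility bound (the paper cites Zywina's form) to a polynomial $F$ whose splitting field over $\Q(U)$ is the generic $\ell$-torsion field with group $\GSp_{2n}(\F_\ell)$, after discarding finitely many parameters where specialization of $F$ and of the finite \'etale torsion cover could disagree; your decomposition over maximal subgroups, with the surjective cyclotomic multiplier excluding those containing $\Sp_{2n}(\F_\ell)$, simply unpacks why the cited thin-set bound applies, which the paper leaves implicit.
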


\begin{proof}
By Proposition~\ref{prop:generic-image}, the finite extension of $\Q(U)$ cut out by
the kernel of the generic representation on $\mathcal J[\ell]$ is
Galois with group $\GSp_{2n}(\F_\ell)$.  After deleting the finitely
many branch points of this extension, we obtain a finite etale Galois
cover of a nonempty open subset $\mathcal V\subset\mathbf A^1_\Q$ with
this Galois group.  For every rational point $u\in\mathcal V(\Q)$, the
specialized decomposition group is, up to conjugacy, the image of
$G_\Q$ on $J_u[\ell]$.  In particular, its image is the full generic
group precisely when $\Gal(L_u/\Q)=\GSp_{2n}(\F_\ell)$.

To apply quantitative Hilbert irreducibility, choose a primitive element
for the generic Galois extension and, after clearing denominators, let
$F(X,U)\in\Z[U][X]$ be a polynomial whose splitting field over $\Q(U)$
is the generic torsion field.  Enlarge the finite exceptional set of parameters so
that, away from it, specialization of this polynomial agrees with
specialization of the finite etale cover.  Quantitative Hilbert
irreducibility may then be applied to $F$.  The integral one-parameter
Cohen--Serre estimate, in the form stated in
\cite[Theorem 1.2]{Zywina2010}, gives
\begin{align*}
&\#\{1\leq u\leq Y\mid\Gal(F(X,u)/\Q)
       \text{ is smaller than the generic group}\}\\
&\hspace{4cm}\ll_{F}Y^{1/2}\log Y.
\end{align*}
The polynomial $F$ and hence the implied constant depend only on
$n$ and $\ell$ through the fixed family.  The finitely many parameters
removed in constructing $\mathcal V$ contribute only $O_{n,\ell}(1)$.
This proves the asserted bound.
\end{proof}

\section{Local monodromy at the varying primes}\label{sec:local}

We next determine the ramification outside the fixed set of primes dividing $m$. We shall use throughout the identity $\Disc_x((x-T)f(x))=\Disc(f)f(T)^2$, up to sign. It follows from the product formula for discriminants, since the resultant of $x-T$ and $f(x)$ is $f(T)$.

\begin{lemma}\label{lem:good-reduction}
Let $u\in\Z$ and let $p\nmid m$. If $p\nmid P(u)$, then $C_u$ has good reduction at $p$. Consequently $L_u/\Q$ is unramified at $p$.
\end{lemma}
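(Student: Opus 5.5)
Set $t=1+mu$ and $g(x)=(x-t)f(x)\in\Z[x]$. This is monic of degree $2n+1$. The plan is to show that the Weierstrass model $y^2=g(x)$ is already a smooth model over $\Z_p$. The unramifiedness of $L_u$ at $p$ will then follow from the criterion of N\'eron--Ogg--Shafarevich.

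\textbf{Step 1: the reduced polynomial is squarefree.} By the identity recorded above, $\Disc_x(g)=\pm\Disc(f)\,f(t)^2=\pm\Disc(f)P(u)^2$. Lemma~\ref{lem:fixed-primes} shows that every prime divisor of $\Disc(f)$ divides $m$, so $p\nmid\Disc(f)$. By hypothesis $p\nmid P(u)$. Hence $p\nmid\Disc(g)$, and the reduction $\overline g\in\F_p[x]$ is monic and squarefree of degree $2n+1$. Concretely, the roots $t,a_1,\dots,a_{2n}$ stay pairwise distinct modulo $p$.

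\textbf{Step 2: the smooth model.} Since $p\nmid m$ and $2\mid m$, the prime $p$ is odd. Take the standard model of an odd-degree hyperelliptic curve: the affine chart $y^2=g(x)$ glued to the chart at infinity $w^2=z^{2n+2}g(1/z)$, with $z=1/x$ and $w=y/x^{n+1}$. This is a proper flat curve over $\Z_{(p)}$. Its special fibre is smooth in both charts.
\begin{itemize}
\item In the affine chart, a singular point would need $2y=0$, $g(x)=0$ and $g'(x)=0$. Since $p$ is odd, this forces a repeated root of $\overline g$, which Step~1 rules out.
\item At infinity, $z^{2n+2}g(1/z)=z\cdot h(z)$ with $h(0)=1$, because $g$ is monic of odd degree. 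So the point at infinity is a simple zero and hence smooth.
\end{itemize}
Therefore $C_u$ has good reduction at $p$, and so $J_u$ has good reduction at $p$.

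\textbf{Step 3: unramifiedness.} Since $p\neq\ell$ (as $\ell\mid m$) and $J_u$ has good reduction at $p$, the N\'eron--Ogg--Shafarevich criterion (Serre--Tate) shows that inertia at $p$ acts trivially on $J_u[\ell]$. Hence $L_u=\Q(J_u[\ell])$ is unramified at $p$.

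The only point requiring care is the smoothness check at the point at infinity in Step~2. The rest is the discriminant identity combined with Lemma~\ref{lem:fixed-primes}.
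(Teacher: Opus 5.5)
Your proposal is correct and follows essentially the same route as the paper: squarefreeness of the reduction of $(x-(1+mu))f(x)$, then smoothness of the odd-degree Weierstrass model over $\F_p$ for odd $p$, then N\'eron--Ogg--Shafarevich using $p\neq\ell$. The only differences are that you get squarefreeness from the identity $\Disc_x((x-t)f(x))=\pm\Disc(f)P(u)^2$ instead of checking directly that the branch points stay distinct, and that you verify smoothness at the point at infinity explicitly, which the paper only asserts.
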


\begin{proof}
By Lemma~\ref{lem:fixed-primes}, the assumption $p\nmid m$ implies $p\nmid2\ell\Disc(f)$. Thus the fixed branch points $a_1,\ldots,a_{2n}$ remain pairwise distinct modulo $p$, and $p\neq2,\ell$. If $p\nmid P(u)=f(1+mu)$, the moving branch point $1+mu$ is also distinct modulo $p$ from every $a_i$. The reduction of $(x-(1+mu))f(x)$ is therefore squarefree. The odd-degree hyperelliptic model, including its unique point at infinity, is smooth over $\F_p$. Hence $C_u$ and $J_u$ have good reduction at $p$. The N\'eron--Ogg--Shafarevich criterion then implies that the $\ell$-torsion representation is unramified at $p$; see \cite{SerreTate1968}.
\end{proof}

If an
abelian variety $A/\Q_p$ has semistable reduction, then the identity
component of the special fibre of its N\'eron model is an extension of an
abelian variety by a torus.  The dimension of this torus is called the
\emph{toric rank} of $A$ at $p$. 

\begin{proposition}\label{prop:local-transvection}
Let $u\in\Z$ and let $p\nmid m$. Suppose that $p\mid P(u)$ but
$p^2\nmid P(u)$. Then $C_u$ has semistable reduction at $p$, with a
special fibre having a unique ordinary double point.  The corresponding
Jacobian $J_u$ has toric rank one.  Moreover, the image of inertia on
$J_u[\ell]$ is cyclic of order $\ell$, and every nonidentity element of
this image is a symplectic transvection.
\end{proposition}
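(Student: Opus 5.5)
Since $p\nmid m$, Lemma~\ref{lem:fixed-primes} gives $p\nmid 2\ell\Disc(f)$. The reductions of $a_1,\ldots,a_{2n}$ are therefore pairwise distinct modulo $p$. Because $p\mid P(u)=f(1+mu)=\prod_i(1+mu-a_i)$ and the $a_i$ are distinct mod $p$, exactly one index $i_0$ has $T_u:=1+mu\equiv a_{i_0}\pmod p$. The squarefree hypothesis then gives $v_p(T_u-a_{i_0})=1$. Over $\Z_p$ I would translate $x\mapsto x+a_{i_0}$ and put $\pi_p:=T_u-a_{i_0}$, a uniformizer times a unit. The equation becomes $y^2=x(x-\pi_p)g(x)$, where $g(x)=\prod_{i\neq i_0}(x+a_{i_0}-a_i)$ has unit constant term and is separable mod $p$ with no root at $0$. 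The special fibre is $y^2=x^2\overline g(x)$. It is smooth away from $(0,0)$ and at infinity, and at the origin it has an ordinary double point, since $\overline g(0)$ is a nonzero square or nonsquare and the tangent cone is $y^2=\overline g(0)x^2$ with $p$ odd.

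Locally at the node the completed local ring of the model is $\Z_p^{\mathrm{ur}}[[x,y]]/(y^2-x(x-\pi_p)h)$ with $h$ a unit. After a change of coordinates this is $uv=\pi_p\cdot(\text{unit})$, so the model is regular with thickness $v_p(\pi_p)=1$. Hence $C_u$ has a stable, in particular semistable, model whose special fibre is irreducible of geometric genus $n-1$ with one node. The normalization is $y'^2=\overline g(x)$ with $y'=y/x$, of degree $2n-1$, hence genus $n-1$. The dual graph is one vertex with one loop, with first Betti number $1$. By the standard description of the Néron model of a Jacobian of a semistable curve (Raynaud; Bosch--Lütkebohmert--Raynaud, Chapter 9), $J_u$ has semistable reduction with identity component an extension of $\Jac(\widetilde C_{\overline s})$, of dimension $n-1$, by a torus of rank $1$. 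So the toric rank is one.

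For the inertia statement I would use Grothendieck's orthogonality theorem (SGA7, Exposé IX) for semistable abelian varieties. Inertia $I_p$ acts unipotently on $T_\ell J_u$ via $\sigma\mapsto 1+N\,t_\ell(\sigma)$, where $t_\ell$ is the tame character. The monodromy operator $N$ has image of rank equal to the toric rank, namely $1$, and the pairing $(x,y)\mapsto\langle x,Ny\rangle$ is given by the monodromy pairing on the character group of the torus. Here the character group is $\Z$, and the pairing is multiplication by the edge thickness, which is $1$. Explicitly, $\sigma x=x+t_\ell(\sigma)\,c\,\langle x,w\rangle w$ with $w$ primitive in $T_\ell J_u$ and $c$ an $\ell$-adic unit; this is the Picard--Lefschetz formula for a single vanishing cycle of thickness one. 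Reducing mod $\ell$, the tame character $t_\ell\bmod\ell$ surjects onto $\F_\ell$ on $I_p$, and since $c\not\equiv0$ and $\overline w\neq0$, the mod-$\ell$ image of inertia is $\{T_{\overline w,a}:a\in\F_\ell\}$. This group is cyclic of order $\ell$, and each nonidentity element is a symplectic transvection. Wild inertia acts trivially because the reduction is semistable and $p\neq\ell$.

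The main obstacle is the last step: getting the constant $c$, equivalently the monodromy pairing value, to be an $\ell$-adic unit, so that the image mod $\ell$ is nontrivial rather than trivial. This is exactly where $v_p(P(u))=1$ is used: a node of thickness $e$ would give $c=e$, which is trivial mod $\ell$ when $\ell\mid e$. I would cite Grothendieck's formula identifying the monodromy pairing with the edge-length pairing on the dual graph (SGA7 IX, \S12.3) and verify the thickness-one computation above carefully. An alternative is to compute directly via the Tate uniformization of the rank-one torus part, whose $q$-parameter has valuation $1$ and so yields a ramified extension of degree $\ell$.
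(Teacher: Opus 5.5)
Your route is the paper's route: a unique colliding pair of branch points, one non-separating node, a dual graph with first Betti number one, then Grothendieck's unipotent description of inertia reduced mod $\ell$. However, the local computation at the node is wrong, and it is exactly the step you single out as the crux. Start from $y^2=x(x-\pi_p)h$ and absorb the unit $h$ into $y$ (legitimate over the strict henselization, since $p$ is odd). Completing the square gives $y^2-(x-\pi_p/2)^2=-\pi_p^2/4$, so $U=y-x+\pi_p/2$ and $V=y+x-\pi_p/2$ satisfy $UV=-\pi_p^2/4$.

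The node therefore has thickness $2$, not $1$. Consequently:
the hyperelliptic model is not regular there (one blow-up inserts a $(-2)$-curve, giving the two-component $I_2$-type fibre);
the component group has order $2$;
the monodromy pairing on the rank-one character group takes the value $2$;
in your alternative, the Tate parameter has valuation $2$.

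You can confirm independently that thickness one is impossible. All branch points of $(x-1-mu)f(x)$ are rational, so $J_u[2]\subseteq J_u(\Q)$ and inertia acts trivially mod $2$. Applying your Picard--Lefschetz formula on $T_2J_u$ (allowed since $p\neq 2$) then forces $c$ to be even.

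The conclusion survives, but for a different reason than the one you give. The coefficient is $c=\pm2$ (more generally $\pm 2v_p(P(u))$), and it is an $\ell$-adic unit because $\ell$ is odd. This is the one place where oddness of $\ell$ is needed, and it is exactly how the paper argues: the monodromy matrix is $(2)$, so $\rho_\ell(\sigma)=1+2t_\ell(\sigma)N_0$ with $2\not\equiv0 \bmod \ell$. Your semistability, toric-rank and symplecticity steps are unaffected, since an irreducible curve with a single node gives a semistable model whatever the thickness. To repair the argument, replace $UV=\pi_p\cdot(\text{unit})$ by $UV=\pi_p^2\cdot(\text{unit})$, drop the regularity claim, and in the last step use ``thickness two and $\ell\neq2$'' instead of ``thickness one''.
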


\begin{proof}
Put $t=1+mu$, so that
$P(u)=f(t)=\prod_{j=1}^{2n}(t-a_j)$.  We first determine the reduction
of the branch points.  Since $p\nmid m=2\ell(2n)!$, we have
$p\neq2,\ell$ and $p>2n$.  The reductions of
$a_1,\ldots,a_{2n}$ modulo $p$ are therefore pairwise distinct.
Indeed, if $a_i\equiv a_j\pmod p$ for $i\neq j$, then
$p\mid a_i-a_j=m(i-j)$.  Since $p\nmid m$, this would imply
$p\mid i-j$, which is impossible because $0<|i-j|<2n<p$.

The condition $p\mid P(u)$ consequently determines a unique index
$i$ for which $t\equiv a_i\pmod p$.  For every $j\neq i$, the
difference $t-a_j$ is a $p$-adic unit.  Hence
$v_p(P(u))=v_p(t-a_i)$, and the assumption $p^2\nmid P(u)$ gives
$v_p(t-a_i)=1$.  Thus precisely two branch points of the hyperelliptic
cover $C_u\rightarrow\PP^1$, namely the moving branch point $t$ and
the fixed branch point $a_i$, have the same reduction modulo $p$.
All of the remaining branch points remain distinct.

We now describe the corresponding singularity.  After translating the
$x$-coordinate we may assume that $a_i=0$, and we write
$\delta=t-a_i$.  Then, near the point where the two branch points meet,
the equation of $C_u$ takes the form
\[
 y^2=x(x-\delta)g(x),
\]
where $v_p(\delta)=1$, $g(0)\in\Z_p^\times$, and the reduction of
$g$ does not vanish at the origin.  Since $p$ is odd, after passing to
an etale neighbourhood the unit $g(x)$ admits a square root and can be
absorbed into the $y$-coordinate.  This is the local
simplification for a hyperelliptic curve with a pair of colliding
branch points; compare
\cite[Propositions~2.2 and~2.3]{AriasDeReynaEtAl2016}.
We are therefore reduced locally to the equation
$y^2=x(x-\delta)$.

Set $X=2x-\delta$ and $Y=2y$.  Then
\[
 (Y-X)(Y+X)=-\delta^2.
\]
Writing $U=Y-X$ and $V=Y+X$, the completed local equation is therefore
$UV=-\delta^2$.  Since $v_p(\delta)=1$, the node has thickness two.
In particular, the natural hyperelliptic model has a special fibre
with exactly one ordinary double point, and there are no other
singularities because every other branch point remains simple modulo
$p$.  This is the local form of a semistable nodal
degeneration; see again
\cite[Propositions~2.2 and~2.3]{AriasDeReynaEtAl2016}.

To determine the toric rank, we describe the special fibre more explicitly.  Modulo
$p$, after translating $a_i$ to zero, its equation has the form
$y^2=x^2\overline g(x)$.  Its normalization is obtained by writing
$z=y/x$, and is given by
$z^2=\overline g(x)$.  The polynomial $\overline g$ is squarefree of
degree $2n-1$, so the normalization is a smooth hyperelliptic curve of
genus $n-1$.  The two points above $x=0$ are identified in the
original special fibre.  Thus the special fibre is geometrically
irreducible and has one nonseparating node.  Its dual graph consists
of one vertex with one loop.

For a semistable curve, the toric part of the identity component of
the N\'eron model of the Jacobian is controlled by the dual graph:
its character group is canonically identified with the first homology
of that graph.  In particular, the toric rank is the first Betti
number of the dual graph; see
\cite[Proposition~1.1]{AriasDeReynaEtAl2016} and
\cite[Corollary~1.4]{Lorenzini1990}.  Since the graph above has one
loop, its first Betti number is one.  It follows that $J_u$ has
semistable reduction of toric rank one.  Equivalently, this follows
from the standard hyperelliptic criterion that one double root and all
remaining roots simple give toric dimension one; compare the
semistable reduction criterion used in
\cite[\S2]{AnniDokchitser2020}.

We shall also need the thickness of the node.  The total space with
local equation $UV=-\delta^2$ is not regular at the singular point.
Resolving a node of thickness two inserts one rational component.
Thus the regular semistable model has two irreducible components, the
strict transform of the original component and one exceptional
rational component, meeting in two points.  The resulting dual graph
has two vertices joined by two edges and again has first Betti number
one.  The same resolution calculation shows that the component group
of the N\'eron model of $J_u$ has order two; see the discussion and
the proof of
\cite[Proposition~2.3]{AriasDeReynaEtAl2016}.  In particular,
$\ell\nmid|\Phi_p|$, since $\ell$ is odd.

We now determine the action of inertia. Unlike the auxiliary prime used
in Proposition~\ref{prop:generic-image}, the prime $p$ is a prime of
semistable reduction, and the inertia action is therefore described by
the monodromy of the node.

For a semistable principally polarized abelian variety at a prime
different from $\ell$, Grothendieck's monodromy description gives,
after choosing a basis adapted to the toric part, an inertia action
whose nontrivial unipotent block is controlled by the integral
monodromy pairing.  In toric rank one this pairing is represented by a
$1\times1$ matrix $N$, and the order of its cokernel is the order of
the component group.  This description is used explicitly in
\cite[Lemma~2.9]{AnniDokchitser2020}; see also
\cite[Lemma~3]{HallOpen2011}.  Since the component group in our case
has order two, the monodromy matrix is $(\pm2)$.  Replacing the
generator of the rank-one lattice by its negative if necessary, we
may write the monodromy matrix as $(2)$.

Thus, after choosing a suitable basis of the $\ell$-adic Tate module,
the action of $\sigma\in I_p$ has the form
\[
 \rho_\ell(\sigma)=1+2t_\ell(\sigma)N_0,
\]
where $t_\ell:I_p\rightarrow\Z_\ell(1)$ is the $\ell$-primary tame
character and $N_0$ is a primitive rank-one nilpotent operator.  Here
$N_0^2=0$.  The coefficient $2$ is precisely the thickness of the
node, or equivalently the value of the monodromy pairing on a
generator of the rank-one toric lattice.

Because $p\neq\ell$, the wild inertia subgroup is pro-$p$ and acts
trivially on this $\ell$-primary unipotent quotient, while the tame
character $t_\ell$ is surjective onto $\Z_\ell(1)$.  Reducing modulo
$\ell$, we therefore obtain
\[
 \rho_\ell(I_p)
 =
 \{1+2c\overline N_0:c\in\F_\ell\}.
\]
Since $\ell$ is odd, the scalar $2$ is nonzero modulo $\ell$, and
$\overline N_0$ is still a nonzero rank-one nilpotent operator.
Consequently the displayed group has exactly $\ell$ elements and is
cyclic.

Every nonidentity element has the form
$1+2c\overline N_0$ with $c\neq0$.  Its difference from the identity
has rank one, and its square is zero.  It is therefore a
transvection.

It remains to see that these transvections are symplectic.  This is the
one point which is the same as in the proof of
Proposition~\ref{prop:generic-image}.  The principal polarization on $J_u$ gives
the Weil pairing, and the multiplier of the Galois action is the
mod-$\ell$ cyclotomic character.  Since $p\neq\ell$, the extension
generated by the $\ell$th roots of unity is unramified at $p$, so the
cyclotomic character is trivial on inertia.  Hence
$\rho_\ell(I_p)\subseteq\Sp_{2n}(\F_\ell)$.  Thus every nonidentity
element of the inertia image is a symplectic transvection.
\end{proof}

\begin{corollary}\label{cor:ram-support-linear}
Suppose that $P(u)$ is squarefree. Outside the set $S$ of primes dividing $m$, the ramification support of $L_u/\Q$ is exactly the set of prime divisors of $P(u)$. At each such prime inertia is cyclic of order $\ell$ and is generated by a transvection.
\end{corollary}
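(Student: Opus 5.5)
The corollary follows by combining Lemma~\ref{lem:good-reduction} with Proposition~\ref{prop:local-transvection}. We split the primes $p\notin S$ into two classes according to whether $p$ divides $P(u)$.

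\emph{Primes not dividing $P(u)$.} If $p\nmid m$ and $p\nmid P(u)$, then Lemma~\ref{lem:good-reduction} shows that $C_u$ has good reduction at $p$. Hence $L_u/\Q$ is unramified at $p$.

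\emph{Primes dividing $P(u)$.} Suppose $p\nmid m$ and $p\mid P(u)$. Since $P(u)$ is squarefree, we have $p^2\nmid P(u)$, so the hypotheses of Proposition~\ref{prop:local-transvection} hold. That proposition gives the following.
\begin{enumerate}[label=\textup{(\alph*)}]
\item The image of inertia $I_p$ on $J_u[\ell]$ is cyclic of order $\ell$.
\item Every nonidentity element of this image is a symplectic transvection.
\end{enumerate}

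Two translations are needed to read this as a statement about $L_u/\Q$.

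First, $L_u=\Q(J_u[\ell])$ is the fixed field of the kernel of $\rho_\ell$. Therefore $\Gal(L_u/\Q)$ is identified with the image of $\rho_\ell$, and the inertia group of $L_u/\Q$ at a prime above $p$ is identified with $\rho_\ell(I_p)$. This is a nontrivial group, so $p$ ramifies in $L_u$. It is cyclic of order $\ell$, and any generator is a nonidentity element, hence a transvection.

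Second, the inertia group is only defined up to conjugacy, as the choice of prime above $p$ varies. Conjugates of transvections are transvections, so the conclusion does not depend on that choice.

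Putting the two cases together, outside $S$ the ramified primes of $L_u/\Q$ are exactly the prime divisors of $P(u)$, with inertia as described. The only subtle point is this identification of the inertia group with $\rho_\ell(I_p)$, which is immediate from the definition of $L_u$.
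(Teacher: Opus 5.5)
Your proposal is correct and follows the paper's proof: Lemma~\ref{lem:good-reduction} handles the primes $p\notin S$ with $p\nmid P(u)$, and squarefreeness reduces the primes $p\mid P(u)$ to Proposition~\ref{prop:local-transvection}. Your additional remarks, identifying the inertia group of $L_u/\Q$ with $\rho_\ell(I_p)$ and noting that conjugation preserves transvections, spell out steps the paper leaves implicit.
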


\begin{proof}
If $p\notin S$ and $p\nmid P(u)$, the extension is unramified by Lemma~\ref{lem:good-reduction}. If $p\mid P(u)$, squarefreeness gives $p^2\nmid P(u)$, and Proposition~\ref{prop:local-transvection} applies.
\end{proof}

We shall also need the identical statement for the projective quotient. Let $Z$ be the scalar centre of $\GSp_{2n}(\F_\ell)$. If $L_u/\Q$ has full Galois group, put $\overline L_u=L_u^Z$.

\begin{corollary}\label{cor:ram-support-projective}
Assume that $\Gal(L_u/\Q)=\GSp_{2n}(\F_\ell)$ and that $P(u)$ is squarefree. Then
$\Gal(\overline L_u/\Q)=\PGSp_{2n}(\F_\ell)$. Outside $S$, the ramification support of $\overline L_u/\Q$ is exactly the prime support of $P(u)$, and at every such prime the projective inertia group is cyclic of order $\ell$ generated by the image of a transvection.
\end{corollary}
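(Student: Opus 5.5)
The plan is to deduce everything from Corollary~\ref{cor:ram-support-linear} by passing to the quotient by $Z$ and using Galois correspondence. First, since $\Gal(L_u/\Q)=\GSp_{2n}(\F_\ell)$ and $Z$ is normal (being central), the fixed field $\overline L_u=L_u^Z$ is Galois over $\Q$. Its Galois group is $\GSp_{2n}(\F_\ell)/Z=\PGSp_{2n}(\F_\ell)$. This disposes of the first assertion.

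Next I will treat ramification. For a prime $p$, an inertia group of $\overline L_u/\Q$ at $p$ is the image in $\PGSp_{2n}(\F_\ell)$ of an inertia group $I\subseteq\GSp_{2n}(\F_\ell)$ of $L_u/\Q$ at a prime above $p$. This is the standard compatibility of inertia with quotients, $I(\overline{\mathfrak P}/p)=IZ/Z$. Now take $p\notin S$ with $p\nmid P(u)$. By Corollary~\ref{cor:ram-support-linear} the group $I$ is trivial, so its image is trivial and $p$ is unramified in $\overline L_u$.

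Now take $p\notin S$ with $p\mid P(u)$. Corollary~\ref{cor:ram-support-linear} gives that $I$ is cyclic of order $\ell$, generated by a transvection $\tau$. Its image is the cyclic group generated by $\overline\tau$. By the last statement of Lemma~\ref{lem:GSp-conjugacy}, $\overline\tau$ has order $\ell$. Equivalently, $I\cap Z=1$, because a nontrivial element of $I$ is a nontrivial unipotent and hence is not scalar. So the projective inertia group is cyclic of order $\ell$, generated by the image of a transvection. In particular it is nontrivial, so $p$ ramifies in $\overline L_u$. Together with the previous paragraph, this shows that outside $S$ the ramification support of $\overline L_u$ is exactly the prime support of $P(u)$.

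I do not expect a real obstacle. The only point needing care is the compatibility of inertia groups with passage to the quotient field. This is standard: restrict the decomposition and inertia groups of a prime $\mathfrak P$ of $L_u$ to $\overline L_u$. The other point is the injectivity of $I\to\PGSp$, which Lemma~\ref{lem:GSp-conjugacy} already supplies.
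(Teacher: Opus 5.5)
Your proposal is correct and follows the paper's proof: Galois correspondence gives $\Gal(\overline L_u/\Q)=\PGSp_{2n}(\F_\ell)$. The inertia description of Corollary~\ref{cor:ram-support-linear} then passes to the quotient, and unramified primes stay unramified. The only cosmetic difference is how you see $I\cap Z=1$: the paper deduces it from $|Z|=\ell-1$ being prime to $\ell$, whereas you deduce it from unipotence via Lemma~\ref{lem:GSp-conjugacy}.
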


\begin{proof}
The first assertion follows from Galois theory. Let $p\notin S$ divide $P(u)$. By Corollary~\ref{cor:ram-support-linear}, inertia in $L_u/\Q$ is a group of order $\ell$ generated by a transvection. The centre $Z$ has order $\ell-1$, so this inertia group meets $Z$ trivially. Its image in the projective quotient therefore still has order $\ell$. If $p\notin S$ does not divide $P(u)$, then the linear extension is unramified and so is every quotient. This proves the claim.
\end{proof}

\section{Squarefree values of the degeneration polynomial}\label{sec:sieve}

We now prove that $P(u)$ is squarefree for a positive proportion of integers. No deep squarefree-value theorem is required because $P$ is a product of linear factors and the arithmetic progression separates those factors at every relevant prime.

For $1\leq i\leq2n$, write $L_i(u)=1+m(u-i)$, so $P(u)=\prod_iL_i(u)$.

\begin{lemma}\label{lem:linear-coprime}
For $i\neq j$, the integers $L_i(u)$ and $L_j(u)$ are relatively prime for every $u\in\Z$.
\end{lemma}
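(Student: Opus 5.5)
The argument is a direct divisibility computation. Suppose a prime $p$ divides both $L_i(u)=1+m(u-i)$ and $L_j(u)=1+m(u-j)$ for some $i\neq j$. I would first subtract the two linear forms to get
\[
L_i(u)-L_j(u)=m(j-i),
\]
so $p\mid m(j-i)$. There are then two cases.

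If $p\mid m$, then $L_i(u)\equiv 1\pmod p$, exactly as in the second half of Lemma~\ref{lem:fixed-primes}, which contradicts $p\mid L_i(u)$.

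If $p\nmid m$, then $p\mid j-i$, and since $0<|j-i|<2n$ this forces $p<2n$. But every prime below $2n$ divides $(2n)!$ and hence $m=2\ell(2n)!$, which contradicts $p\nmid m$.

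In both cases no such prime exists, so $\gcd(L_i(u),L_j(u))=1$. The same reasoning also covers the possibility that some $L_i(u)=\pm1$ or $0$, because no prime divisor common to both can exist. The only point that needs attention is recording that all primes at most $2n$ divide $m$; this is exactly why $m$ was chosen to contain $(2n)!$.
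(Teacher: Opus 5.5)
Your proposal is correct and follows essentially the same route as the paper: take the difference $m(j-i)$, rule out $p\mid m$ via $L_i(u)\equiv 1\pmod p$, and otherwise derive $p\mid j-i$, which forces $p\leq 2n$ and hence $p\mid(2n)!\mid m$, a contradiction.
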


\begin{proof}
Suppose that a prime $p$ divides both. Then $p$ divides their difference $m(j-i)$. If $p\mid m$, this is impossible because $L_i(u)\equiv1\pmod p$. Hence $p\nmid m$, and therefore $p\mid(j-i)$. But every prime divisor of $j-i$ is at most $2n$ and hence divides $(2n)!$, so it divides $m$, a contradiction.
\end{proof}

\begin{lemma}\label{lem:local-squarefree-density}
Let $p\nmid m$. There are exactly $2n$ residue classes $u\pmod{p^2}$ for which $p^2\mid P(u)$.
\end{lemma}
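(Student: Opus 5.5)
Since $P(u)=\prod_{i=1}^{2n}L_i(u)$ with $L_i(u)=1+m(u-i)$, the plan is to reduce the condition $p^2\mid P(u)$ to a condition on a single linear factor and then count solutions of a linear congruence modulo $p^2$.

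\emph{Reduction to one factor.} By Lemma~\ref{lem:linear-coprime}, for each $u\in\Z$ the prime $p$ divides at most one of the factors $L_i(u)$. Hence $v_p(P(u))=v_p(L_i(u))$ for the unique $i$ with $p\mid L_i(u)$, if such an $i$ exists, and $v_p(P(u))=0$ otherwise. It follows that $p^2\mid P(u)$ if and only if $p^2\mid L_i(u)$ for some $i$.

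\emph{Counting each factor.} Since $p\nmid m$, the element $m$ is invertible modulo $p^2$. The congruence $1+m(u-i)\equiv 0\pmod{p^2}$ therefore has exactly one solution, namely
\[
u\equiv i-m^{-1}\pmod{p^2}.
\]
Call this residue class $c_i$.

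\emph{Distinctness and disjointness.} Suppose $c_i=c_j$ with $i\neq j$. Then $i\equiv j\pmod{p^2}$. But $p\nmid m=2\ell(2n)!$ forces $p>2n$, while $0<|i-j|<2n$, so this is impossible. Alternatively, a common class would make $p$ divide both $L_i(u)$ and $L_j(u)$, contradicting Lemma~\ref{lem:linear-coprime}. Thus the $2n$ classes $c_1,\ldots,c_{2n}$ are pairwise distinct, and their union is exactly the set of classes $u\pmod{p^2}$ with $p^2\mid P(u)$.

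The argument is entirely elementary. The only point needing care is the reduction step: one must confirm that $p^2\mid P(u)$ cannot arise from $p$ dividing two different factors. This is exactly what the coprimality in Lemma~\ref{lem:linear-coprime} rules out.
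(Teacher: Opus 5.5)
Your proposal is correct and follows essentially the same route as the paper: a unique solution of each linear congruence modulo $p^2$ (since $m$ is invertible), distinctness of the $2n$ classes from $p>2n>|i-j|>0$, and Lemma~\ref{lem:linear-coprime} to reduce $p^2\mid P(u)$ to $p^2$ dividing a single factor. Your alternative distinctness argument via coprimality is also valid, and your explicit $v_p$ bookkeeping in the reduction step is a slightly more careful version of the paper's one-line appeal to that lemma.
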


\begin{proof}
Since $m$ is invertible modulo $p^2$, each congruence $L_i(u)\equiv0\pmod{p^2}$ has exactly one solution modulo $p^2$. These $2n$ solutions are distinct. Indeed, if the solutions for $i$ and $j$ were equal, then $p^2$ would divide $L_i(u)-L_j(u)=m(j-i)$. Since $p\nmid m$, this would give $p^2\mid(j-i)$. On the other hand, $p\nmid m$ implies $p>2n$, while $0<|i-j|<2n$. This is impossible. By Lemma~\ref{lem:linear-coprime}, $p^2\mid P(u)$ occurs precisely when $p^2$ divides one of the factors, so the listed classes are all the possibilities.
\end{proof}

\begin{proposition}\label{prop:squarefree}
There is a constant $c_{n,\ell}>0$ such that, for all sufficiently large $Y$, the number of integers $1\leq u\leq Y$ for which $P(u)$ is squarefree is at least $c_{n,\ell}Y$.
\end{proposition}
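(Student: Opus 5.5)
The plan is a standard squarefree sieve for a product of linear forms. Since $\gcd(P(u),m)=1$ for every $u$ by Lemma~\ref{lem:fixed-primes}, no prime dividing $m$ can have its square dividing $P(u)$. Hence $P(u)$ fails to be squarefree exactly when $p^2\mid P(u)$ for some prime $p\nmid m$. For $1\leq u\leq Y$ the values $P(u)$ are positive integers of size $\ll_{n,\ell}Y^{2n}$.

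The key point is that each linear factor satisfies $|L_i(u)|\ll mY$. By Lemma~\ref{lem:linear-coprime}, $p^2\mid P(u)$ forces $p^2\mid L_i(u)$ for a single $i$. So only primes with $p\leq \sqrt{C mY}$, with $C$ a suitable constant, can occur.

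I will split the primes at a fixed threshold $z$, chosen large depending on $n,\ell$, and bound three contributions.

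\textbf{Small primes.} For $p\leq z$, $p\nmid m$, I use the Chinese remainder theorem and Lemma~\ref{lem:local-squarefree-density}. The number of $u\leq Y$ with $p^2\nmid P(u)$ for all such $p$ is
\[
Y\prod_{p\leq z,\ p\nmid m}\Bigl(1-\frac{2n}{p^2}\Bigr)+O\Bigl(\prod_{p\leq z}p^2\Bigr).
\]
Each factor is positive because $p>2n$, and the product converges as $z\to\infty$ to a constant $\delta_{n,\ell}>0$, since $\sum 2n/p^2<\infty$. For fixed $z$ the error term is $O_z(1)$.

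\textbf{Medium primes.} For $z<p\leq Y$, Lemma~\ref{lem:local-squarefree-density} gives that the number of $u\leq Y$ with $p^2\mid P(u)$ is at most $2n(Y/p^2+1)$. Summing over $z<p\leq \sqrt{CmY}$ gives
\[
\ll nY\sum_{p>z}p^{-2}+n\pi\bigl(\sqrt{CmY}\bigr)\ll nY/z+O_{n,\ell}\bigl(\sqrt Y\bigr).
\]
Here $p\leq\sqrt{CmY}\leq Y$ for large $Y$, so no large primes remain. This is the step I expected to be the main obstacle, since for general polynomials one needs the large-prime range $p>Y$ (Hooley/Granville). The linearity of the factors removes it, because the range of relevant $p$ is already at most $\sqrt{Y}$.

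\textbf{Conclusion.} I choose $z$ so that the medium-prime loss $\ll nY/z$ is at most $\delta_{n,\ell}Y/2$. Then for large $Y$ the count of squarefree values is at least $\delta_{n,\ell}Y/2-O(\sqrt Y)-O_z(1)\geq c_{n,\ell}Y$. Positivity of $P(u)$ for $u\geq 2n$ is immediate, since every factor $1+m(u-i)\geq 1$.
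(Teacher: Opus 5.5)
Your proposal is correct and follows the paper's proof essentially step for step. The paper also runs a CRT sieve over the primes $p\le z$, $p\nmid m$, with density $\prod(1-2n/p^2)$, then bounds the primes $z<p\ll Y^{1/2}$ using Lemma~\ref{lem:linear-coprime} and $|L_i(u)|\ll Y$, giving a loss $\ll Y/z+Y^{1/2}$. One aside is wrong but harmless: $P(u)$ need not be positive for every $1\le u\le Y$ (for instance $P(1)<0$). All you actually use is $L_i(u)\neq0$, which holds automatically since $L_i(u)\equiv1\pmod m$.
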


\begin{proof}
Fix a real number $z>2n$.  For every prime $p\leq z$ with $p\nmid m$,
remove the $2n$ residue classes modulo $p^2$ described in
Lemma~\ref{lem:local-squarefree-density}.  Since the moduli $p^2$ are
pairwise coprime, the Chinese remainder theorem shows that the surviving
classes modulo their product have density
\[
 \delta_z=\prod_{\substack{p\leq z\\p\nmid m}}
 \left(1-\frac{2n}{p^2}\right).
\]
Consequently the number of integers $1\leq u\leq Y$ which survive all
these finitely many local conditions is
$\delta_zY+O_{n,\ell,z}(1)$.  Every prime $p\nmid m$ satisfies
$p>2n$, so each Euler factor is positive.  Moreover $\sum_pp^{-2}<\infty$, and hence the infinite product $\delta_\infty=\prod_{p\nmid m}(1-2n/p^2)$ is positive. In particular, for all sufficiently large $z$ one has, say,
$\delta_z>\delta_\infty/2$.

It remains to control square divisors coming from primes larger than
$z$.  Suppose $p>z$ and $p^2\mid P(u)$.  By
Lemma~\ref{lem:linear-coprime}, the prime $p$ divides exactly one of the
linear factors $L_i(u)$, and hence $p^2\mid L_i(u)$ for a unique $i$.
For $1\leq u\leq Y$ we have $|L_i(u)|\leq C_{n,\ell}Y$ once $Y$ is
large, and $L_i(u)$ never vanishes.  Therefore
$p\leq C_{n,\ell}^{1/2}Y^{1/2}$.  For fixed $p$ and $i$, the congruence
$p^2\mid L_i(u)$ determines one residue class modulo $p^2$, so it has at
most $Y/p^2+1$ solutions with $1\leq u\leq Y$.  Summing over $i$ and
over $z<p\ll_{n,\ell}Y^{1/2}$ gives
\[
 \ll_{n,\ell}
 Y\sum_{p>z}\frac1{p^2}
 +\pi(CY^{1/2})
 \ll_{n,\ell}
 \frac{Y}{z}+Y^{1/2}.
\]
The final estimate follows from the elementary bounds
$\sum_{p>z}p^{-2}\ll z^{-1}$ and $\pi(x)\leq x$.

Choose $z$ so large that the coefficient of $Y/z$ in the last bound is
less than $\delta_\infty/8$.  With this $z$ fixed, take $Y$ sufficiently
large that both the $O_{n,\ell,z}(1)$ term in the small-prime sieve and
the $O_{n,\ell}(Y^{1/2})$ term in the large-prime estimate are less than
$\delta_\infty Y/8$.  At least a fixed positive proportion of the
integers $u\leq Y$ then survive every condition $p^2\nmid P(u)$.
These are precisely the parameters for which $P(u)$ is squarefree,
and the proposition follows.
\end{proof}

We next observe that distinct squarefree values of $P(u)$ give different ramification supports.

\begin{lemma}\label{lem:distinct-values}
For $u>2n$, the integer $P(u)$ is positive and $P(u)$ is strictly increasing as a function of $u$. Consequently, if $u_1\neq u_2$ are both larger than $2n$ and $P(u_1),P(u_2)$ are squarefree, then the two values have different prime supports.
\end{lemma}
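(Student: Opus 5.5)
The plan is to work directly with the factorization $P(u)=\prod_{i=1}^{2n}L_i(u)$, where $L_i(u)=1+m(u-i)$. For an integer $u>2n$ we have $u-i\geq 1$ for every $1\leq i\leq 2n$. Hence $L_i(u)\geq 1+m>1$, and each factor is a positive integer. The product $P(u)$ is therefore positive.

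For monotonicity, compare $u$ with $u+1$ when $u>2n$. Each factor satisfies $L_i(u+1)=L_i(u)+m>L_i(u)>0$. A product of positive quantities, each strictly increased, is strictly increased, so $P(u+1)>P(u)$. By induction $P$ is strictly increasing on the integers $u>2n$. In particular, $u_1\neq u_2$ with both exceeding $2n$ gives $P(u_1)\neq P(u_2)$. Alternatively, one could note that $P$ is a polynomial with all real roots $i-1/m<2n$, so it is positive and increasing to the right of its largest root. The factorwise argument avoids calculus, so I would use it.

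For the consequence, suppose $P(u_1)$ and $P(u_2)$ are squarefree. A positive squarefree integer equals the product of its prime divisors, so it is determined by its prime support. Two positive squarefree integers with the same prime support are therefore equal, which contradicts $P(u_1)\neq P(u_2)$.

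There is no genuine obstacle here. The only point needing care is making sure every factor is positive, not merely nonzero, so that the factorwise comparison is legitimate. This is exactly why the hypothesis $u>2n$, rather than $u\geq 1$, is imposed.
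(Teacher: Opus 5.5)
Your proposal is correct and follows essentially the same argument as the paper: each factor $1+m(u-i)$ is positive and strictly increasing for $u>2n$, so $P(u)$ is positive and strictly increasing, and positive squarefree integers with equal prime support coincide. Your explicit step $L_i(u+1)=L_i(u)+m$ simply spells out the monotonicity that the paper states in one line.
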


\begin{proof}
If $u>2n$, each factor $1+m(u-i)$ is positive and strictly increases with $u$. Hence their product is positive and strictly increasing. Two positive squarefree integers have the same set of prime divisors only when they are equal. Thus equality of the prime supports would imply $P(u_1)=P(u_2)$, and strict monotonicity gives $u_1=u_2$.
\end{proof}

Combining the sieve with quantitative Hilbert irreducibility gives the set of parameters to which the counting proposition will be applied.

\begin{proposition}\label{prop:good-parameters}
For all sufficiently large $Y$, there are $\gg_{n,\ell}Y$ integers $u$ with $2n<u\leq Y$ such that $P(u)$ is squarefree and
$\Gal(L_u/\Q)=\GSp_{2n}(\F_\ell)$.
\end{proposition}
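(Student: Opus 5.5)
The plan is simply to intersect the set from Proposition~\ref{prop:squarefree} with the complement of the exceptional set from Proposition~\ref{prop:HIT}. The $\gg Y$ squarefree values should overwhelm the $O(Y^{1/2}\log Y)$ exceptions.

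Let $\mathcal S(Y)$ be the set of integers $1\leq u\leq Y$ with $P(u)$ squarefree. Proposition~\ref{prop:squarefree} gives $\#\mathcal S(Y)\geq c_{n,\ell}Y$ for all sufficiently large $Y$. First remove the parameters $u\leq 2n$. There are at most $2n$ of them, a bounded number depending only on $n$, so what remains has size at least $c_{n,\ell}Y-2n$.

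Next remove the exceptional set $\mathcal E(Y)$ of Proposition~\ref{prop:HIT}. This is the set of $u\leq Y$ for which $\Gal(L_u/\Q)$ is a proper subgroup of $\GSp_{2n}(\F_\ell)$, and it satisfies $\#\mathcal E(Y)\ll_{n,\ell}Y^{1/2}\log Y$. The surviving set consists of integers $2n<u\leq Y$ with $P(u)$ squarefree and $\Gal(L_u/\Q)=\GSp_{2n}(\F_\ell)$. Its size is at least
\[
c_{n,\ell}Y-2n-O_{n,\ell}(Y^{1/2}\log Y).
\]
Since $Y^{1/2}\log Y=o(Y)$, this quantity exceeds $\tfrac12 c_{n,\ell}Y$ once $Y$ is sufficiently large in terms of $n$ and $\ell$, which is the claim.

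The only point needing care is a bookkeeping one, and I do not expect it to be an obstacle. Proposition~\ref{prop:HIT} as stated covers every integer $1\leq u\leq Y$; the finitely many parameters where the specialization is degenerate, including the points $T=a_i$, are already absorbed into its $O_{n,\ell}(1)$ term. In fact $T=1+mu$ is never equal to some $a_i=im$, because $m>1$, so every $C_u$ is a smooth genus-$n$ curve and $L_u$ is well defined for all $u$. The subtraction therefore needs no further adjustment.
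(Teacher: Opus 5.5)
Your argument is correct and is exactly the paper's proof: take the $\geq c_{n,\ell}Y$ squarefree parameters from Proposition~\ref{prop:squarefree}, then discard the $O(1)$ values $u\leq 2n$ and the $O_{n,\ell}(Y^{1/2}\log Y)=o(Y)$ exceptional parameters from Proposition~\ref{prop:HIT}. Your side remark is also correct: $1+mu\equiv 1\pmod m$ while $a_i\equiv 0\pmod m$, so $T=1+mu$ never equals a fixed branch point and $L_u$ is defined for every $u$.
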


\begin{proof}
By Proposition~\ref{prop:squarefree}, the squarefree condition holds for at least $c_{n,\ell}Y+O(1)$ integers $u\leq Y$. By Proposition~\ref{prop:HIT}, only $O_{n,\ell}(Y^{1/2}\log Y)=o(Y)$ of all integers $u\leq Y$ have smaller residual image. Removing these exceptional values, as well as the fixed initial range $u\leq2n$, leaves $\gg_{n,\ell}Y$ parameters satisfying both conditions.
\end{proof}

\section{Proofs of the main theorems}\label{sec:proofs}

We begin with the projective case.

\begin{proof}[Proof of Theorem~\ref{thm:PGSp-main}]
Let $S$ be the set of primes dividing $m$, and let $\pi$ be a faithful transitive permutation representation of $\PGSp_{2n}(\F_\ell)$ in $S_d$. Choose one symplectic transvection $\tau$ and let $\overline\tau$ denote its projective image. By Lemma~\ref{lem:GSp-conjugacy}, the $\PGSp$-conjugacy class of $\overline\tau$ is independent of the choice of $\tau$.

For each of the $\gg Y$ parameters supplied by Proposition~\ref{prop:good-parameters}, the extension $L_u/\Q$ has group $\GSp_{2n}(\F_\ell)$, and hence the fixed field $\overline L_u=L_u^Z$ of the scalar centre has group $\PGSp_{2n}(\F_\ell)$. The polynomial $P(u)$ has degree $2n$, is positive and squarefree, is prime to every element of $S$ by Lemma~\ref{lem:fixed-primes}, and its values on the chosen range are distinct by Lemma~\ref{lem:distinct-values}. Finally, Corollary~\ref{cor:ram-support-projective} shows that outside $S$ the projective extension is ramified precisely at the primes dividing $P(u)$ and that its inertia group at each of these primes is generated by a conjugate of $\overline\tau$.

All hypotheses of Proposition~\ref{prop:counting-principle} are therefore satisfied with $G=\PGSp_{2n}(\F_\ell)$ and $r=2n$. The proposition gives exactly the lower bound asserted in Theorem~\ref{thm:PGSp-main}.
\end{proof}

\begin{proof}[Proof of Theorem~\ref{thm:GSp-main}]
Let $\pi$ be a faithful transitive permutation representation of $\GSp_{2n}(\F_\ell)$ in $S_d$, and let $\tau$ be a transvection. We use the same set $S$ and the same $\gg Y$ parameters from Proposition~\ref{prop:good-parameters}. For such a parameter, $L_u/\Q$ has full Galois group $\GSp_{2n}(\F_\ell)$. By Corollary~\ref{cor:ram-support-linear}, outside $S$ its ramification support is exactly the prime support of $P(u)$ and inertia at every varying prime is generated by a transvection. By Lemma~\ref{lem:GSp-conjugacy}, every such inertia generator is conjugate in $\GSp_{2n}(\F_\ell)$ to the fixed transvection $\tau$.

The remaining hypotheses of Proposition~\ref{prop:counting-principle} are the same as in the projective proof. Applying that proposition with $r=2n$ gives exactly the lower bound asserted in Theorem~\ref{thm:GSp-main}.
\end{proof}

\section{Explicit permutation representations}\label{sec:actions}

We now compute the index appearing in the main theorems for the natural actions. Let $V$ be a $2n$-dimensional symplectic space, and write a transvection in the form
$\tau(x)=x+a\langle x,w\rangle w$ with $w\neq0$ and $a\neq0$. Its fixed subspace is the hyperplane $w^\perp$, which has cardinality $\ell^{2n-1}$.

\begin{proposition}\label{prop:vector-index}
In the action of $\GSp(V)$ on $V\setminus\{0\}$, one has $\ind(\tau)=\ell^{2n-2}(\ell-1)^2$.
\end{proposition}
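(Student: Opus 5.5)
Since $\ind(\tau)=d-\#\{\text{orbits of }\tau\}$ with $d=\ell^{2n}-1$, the plan is to compute the cycle structure of $\tau$ on $V\setminus\{0\}$ directly. By Lemma~\ref{lem:GSp-conjugacy} the index does not depend on which transvection we take, so we may work with $\tau=T_{w,a}$, that is, $\tau(x)=x+a\langle x,w\rangle w$. Because $\tau$ has order $\ell$, which is prime, each orbit of $\tau$ is either a fixed point or a cycle of length exactly $\ell$. It therefore suffices to count the fixed points.

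A vector $x\neq0$ is fixed by $\tau$ exactly when $a\langle x,w\rangle w=0$. Since $a\neq0$ and $w\neq0$, this means $\langle x,w\rangle=0$, i.e.\ $x\in w^\perp$. The form is nondegenerate, so $w^\perp$ is a hyperplane, and the number of fixed points in $V\setminus\{0\}$ is $\ell^{2n-1}-1$. The points that are not fixed lie in $V\setminus w^\perp$, which has $\ell^{2n}-\ell^{2n-1}=\ell^{2n-1}(\ell-1)$ elements. These split into $\ell^{2n-2}(\ell-1)$ cycles, each of length $\ell$.

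Since a fixed point contributes nothing and each $\ell$-cycle contributes $\ell-1$, we get
\[
\ind(\tau)=\ell^{2n-2}(\ell-1)\cdot(\ell-1)=\ell^{2n-2}(\ell-1)^2,
\]
as claimed. The argument has no real obstacle. The only thing to check is that no non-fixed point can lie in a shorter cycle, and this follows at once because the order of $\tau$ is prime. One can also see it directly: for $x\notin w^\perp$ we have $\tau^k(x)=x+ka\langle x,w\rangle w$, and these vectors are distinct for $0\le k<\ell$.
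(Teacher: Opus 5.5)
Your proposal is correct and follows essentially the same route as the paper: you count the $\ell^{2n-1}-1$ fixed vectors in $w^\perp\setminus\{0\}$ and use that $\tau$ has prime order $\ell$ to split the $\ell^{2n-1}(\ell-1)$ moving vectors into $\ell$-cycles. Each $\ell$-cycle contributes $\ell-1$, which gives $\ind(\tau)=\ell^{2n-2}(\ell-1)^2$.
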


\begin{proof}
The set $V\setminus\{0\}$ has $\ell^{2n}-1$ elements. The fixed nonzero vectors are the elements of $w^\perp\setminus\{0\}$, so there are $\ell^{2n-1}-1$ fixed points. Hence the number of moving points is $\ell^{2n-1}(\ell-1)$.

The transvection has order $\ell$. Every orbit of the cyclic group $\langle\tau\rangle$ therefore has size one or $\ell$. The moving points consequently break into $\ell^{2n-2}(\ell-1)$ cycles of length $\ell$. The contribution of these cycles to the permutation index is $(\ell-1)$ times their number, giving $\ell^{2n-2}(\ell-1)^2$.
\end{proof}

\begin{proposition}\label{prop:projective-index}
In the natural action of $\PGSp(V)$ on $\PP(V)$, one has $\ind(\overline\tau)=\ell^{2n-2}(\ell-1)$.
\end{proposition}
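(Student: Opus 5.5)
Writing the projective image $\overline\tau$ for the transvection $\tau(x)=x+a\langle x,w\rangle w$, we compute its action on lines directly, in parallel with the proof of Proposition~\ref{prop:vector-index}. The set $\PP(V)$ has $(\ell^{2n}-1)/(\ell-1)$ elements, and $\overline\tau$ still has order $\ell$ by Lemma~\ref{lem:GSp-conjugacy}. Every orbit of $\langle\overline\tau\rangle$ on $\PP(V)$ therefore has size $1$ or $\ell$, so
\[
\ind(\overline\tau)=(\ell-1)\cdot\frac{\#\{\text{moving points}\}}{\ell}.
\]
It thus suffices to count the fixed lines.

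A line $\langle x\rangle$ is fixed exactly when $\tau x=\lambda x$ for some $\lambda\in\F_\ell^\times$. Since $\tau$ is unipotent, its only eigenvalue is $1$, so $\lambda=1$. (Directly: $\tau x-x=a\langle x,w\rangle w$, so $\tau x=\lambda x$ with $\lambda\neq1$ would force $x\in\langle w\rangle$; but $\langle w,w\rangle=0$, so $\tau w=w$ and hence $\lambda=1$ after all.) The fixed lines are therefore exactly the lines contained in $w^\perp$, and there are $(\ell^{2n-1}-1)/(\ell-1)$ of them.

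Subtracting, the number of moving points is
\[
\frac{\ell^{2n}-\ell^{2n-1}}{\ell-1}=\ell^{2n-1}.
\]
These split into $\ell^{2n-2}$ cycles of length $\ell$, giving $\ind(\overline\tau)=\ell^{2n-2}(\ell-1)$.

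The only step needing care is ruling out fixed lines that are eigenlines with eigenvalue $\lambda\neq1$, which the unipotence argument above handles. Everything else is routine counting.
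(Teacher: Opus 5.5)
Your proposal is correct and follows the paper's proof essentially step for step: you identify the fixed lines as the lines in $w^\perp$ using unipotence of $\tau$, subtract to get $\ell^{2n-1}$ moving points, and use that $\overline\tau$ has order $\ell$ (Lemma~\ref{lem:GSp-conjugacy}) to split them into $\ell^{2n-2}$ cycles of length $\ell$. Your direct check that no eigenline has eigenvalue $\lambda\neq1$ just spells out the paper's one-line unipotence remark.
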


\begin{proof}
The projective space has $(\ell^{2n}-1)/(\ell-1)$ points. A line is fixed by $\overline\tau$ precisely when it is represented by a fixed vector of $\tau$. Indeed, $\tau$ is unipotent, so if $\tau(v)$ is a scalar multiple of $v$, that scalar must be $1$. The fixed projective points are therefore the lines contained in $w^\perp$, of which there are $(\ell^{2n-1}-1)/(\ell-1)$. Hence exactly $\ell^{2n-1}$ projective points move.

By Lemma~\ref{lem:GSp-conjugacy}, the projective transvection has order $\ell$. Thus the moving points form $\ell^{2n-2}$ cycles, each of length $\ell$, and the index is $\ell^{2n-2}(\ell-1)$.
\end{proof}

The preceding propositions compute the inertia indices which occur in
our construction.  We next compute the least nontrivial permutation
indices in these actions, in order to justify the comparison with
Malle's conjecture made in the introduction.

\begin{proposition}\label{prop:malle-invariants-natural}
Let $\pi_{\mathrm{vec}}$ be the natural action of
$\GSp_{2n}(\F_\ell)$ on $V\setminus\{0\}$, and let
$\pi_{\mathrm{proj}}$ be the natural action of
$\PGSp_{2n}(\F_\ell)$ on $\PP(V)$.  If $n\geq2$, then
\[
 a_{\pi_{\mathrm{vec}}}\bigl(\GSp_{2n}(\F_\ell)\bigr)
 =\frac{\ell^{2n-2}(\ell^2-1)}{2}
\]
and
\[
 a_{\pi_{\mathrm{proj}}}\bigl(\PGSp_{2n}(\F_\ell)\bigr)
 =\frac{(\ell+1)(\ell^{2n-2}-1)}{2}.
\]
For $n=1$, the corresponding values are
$\ell(\ell-1)/2$ and $(\ell-1)/2$, respectively.
\end{proposition}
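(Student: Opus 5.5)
The plan is to bound $\ind(\pi(g))$ from below using fixed points only, and then show that the bound is attained by an explicit involution. If $g$ acts on a set of size $d$ with $\phi(g)$ fixed points, every nontrivial cycle of length $e\geq2$ contributes $e-1\geq e/2$ to the index. Hence
\[
\ind(g)\geq\tfrac12\bigl(d-\phi(g)\bigr),
\]
with equality exactly when $g$ acts as an involution on the moving points. So the problem reduces to two tasks: find the nontrivial elements with the most fixed points, and compare the exceptional (transvection-type) elements separately.

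\emph{Vector action.} Here $\phi(g)=\ell^{\dim\ker(g-1)}-1$. I will first show that for $n\geq2$, any $g\in\GSp(V)$ fixing a hyperplane $H$ pointwise is a transvection. The reason is that $H$ contains a hyperbolic pair $x,y$ (since $n\geq2$), and $\langle gx,gy\rangle=\langle x,y\rangle$ forces the multiplier to be $1$. Then $g\in\Sp(V)$ and $g-1$ has rank one. For transvections, Proposition~\ref{prop:vector-index} gives index $\ell^{2n-2}(\ell-1)^2$. Every other nontrivial $g$ has $\dim\ker(g-1)\leq2n-2$, so
\[
\ind(g)\geq\tfrac12\bigl(\ell^{2n}-\ell^{2n-2}\bigr)=\tfrac12\,\ell^{2n-2}(\ell^2-1).
\]
This bound is attained by the involution acting as $-1$ on a nondegenerate plane and trivially on its orthogonal complement. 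That involution is symplectic, its fixed space has dimension $2n-2$, and all other vectors lie in $2$-cycles. Finally, $\ell^{2n-2}(\ell-1)^2\geq\ell^{2n-2}(\ell^2-1)/2$ reduces to $\ell\geq3$. This gives the first formula, and shows the transvection ties exactly when $\ell=3$.

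\emph{Projective action.} A point of $\PP(V)$ is fixed by $\overline g$ exactly when it is an eigenline of $g$. So $\phi(\overline g)=\sum_\lambda(\ell^{d_\lambda}-1)/(\ell-1)$, summed over the $\F_\ell$-rational eigenvalues $\lambda$ with eigenspace dimensions $d_\lambda$, where $\sum d_\lambda\leq2n$. The exceptional case is some $d_\lambda=2n-1$. When $n\geq2$, the eigenspace then contains a hyperbolic pair, which forces $\lambda^2=\mu$. So $\lambda^{-1}g$ is symplectic and fixes a hyperplane, making $\overline g$ the image of a transvection, whose index is $\ell^{2n-2}(\ell-1)$ by Proposition~\ref{prop:projective-index}. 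Otherwise every $d_\lambda\leq2n-2$. Since $x\mapsto\ell^x$ is convex, the sum $\sum_\lambda(\ell^{d_\lambda}-1)$ is maximized at the partition $(2n-2,2)$. This gives at most $\ell^{2n-1}+\ell^{2n-2}+\cdots$ fixed points, and more precisely
\[
d-\phi(\overline g)\geq\frac{\ell^{2n}-\ell^{2n-2}-\ell^2+1}{\ell-1}=(\ell+1)(\ell^{2n-2}-1).
\]
Halving gives the claimed value, which is again attained by the same involution, with eigenvalues $-1$ on a plane and $1$ on its complement. It remains to check $\ell^{2n-2}(\ell-1)\geq(\ell+1)(\ell^{2n-2}-1)/2$, which holds for $\ell\geq3$.

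\emph{The case $n=1$.} This needs separate handling because a hyperplane is now a line and the hyperbolic-pair argument fails. In $\GL_2$, the element $\operatorname{diag}(1,-1)$ has $\ell-1$ fixed vectors and moves the rest in $2$-cycles, giving index $\ell(\ell-1)/2$. Transvections give $(\ell-1)^2$, which is at least this. Every other nontrivial element has at most $\ell-1$ fixed vectors. In $\PGL_2$, the split involution fixes $2$ points, giving index $(\ell-1)/2$. Unipotent elements give $\ell-1$, and all other elements fix at most $2$ points.

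I expect the main obstacle to be the projective eigenline count. The delicate points are handling non-semisimple elements and eigenvalues outside $\F_\ell$, and justifying that rational eigenspaces have total dimension at most $2n$, so that the convexity bound applies to all nontrivial $\overline g$.
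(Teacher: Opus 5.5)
Your proof is correct, but it takes a different route from the paper's.

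\textbf{The paper's route.} The paper first reduces to elements of prime order $r$, using that a prime-order power $h$ of $g$ satisfies $\ind(h)\le\ind(g)$. It then uses the exact formula $\ind(g)=\frac{r-1}{r}(|\Omega|-|\mathrm{Fix}(g)|)$ and splits into the unipotent case $r=\ell$ and the semisimple case $r\neq\ell$. In the semisimple case it relies on the pairing of $E_\lambda$ with $E_{\mu/\lambda}$:
\begin{itemize}
\item when $\mu=1$, the $1$-eigenspace is nondegenerate;
\item when $\mu\neq1$, the fixed space is totally isotropic;
\item in the projective case, no eigenspace has dimension $2n-1$.
\end{itemize}
For the projective action it also needs a rescaling argument to produce a lift of $\overline g$ of order $r$.

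\textbf{Your route.} You use the bound $\ind(g)\ge\frac12(d-\phi(g))$, which holds for every permutation, so you need neither the reduction to prime order nor semisimplicity. Your only group-theoretic input is one observation. For $n\ge2$, a hyperplane contains a hyperbolic pair, which forces $\lambda^2=\mu$. Hence a similitude with a $(2n-1)$-dimensional eigenspace is a scalar multiple of a transvection. Every other non-scalar element has all rational eigenspaces of dimension at most $2n-2$.

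This is shorter and treats all elements, including non-semisimple ones of composite order, uniformly. The paper's approach gives somewhat more structural information about individual prime-order elements, but the statement does not need it.

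\textbf{Your closing worries.} The issues you flag at the end are not real obstacles. For any linear map, eigenspaces for distinct eigenvalues form a direct sum, so $\sum_\lambda d_\lambda\le 2n$ holds whether or not $g$ is semisimple. Eigenvalues outside $\F_\ell$ simply contribute no fixed points of $\PP(V)$. You should also state explicitly that $d_\lambda=2n$ is excluded because $\overline g\neq1$.

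\textbf{A small slip.} Your phrase bounding the number of fixed points by ``$\ell^{2n-1}+\ell^{2n-2}+\cdots$'' should read $(\ell^{2n-2}-1)/(\ell-1)+(\ell+1)$. Your displayed inequality for $d-\phi(\overline g)$ is nevertheless correct.
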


\begin{proof}
We first record a reduction which will be used in both actions.  If a
nonidentity permutation $g$ has composite order, choose a nontrivial
power $h\in\langle g\rangle$ of prime order.  Every
$\langle g\rangle$-orbit is a union of $\langle h\rangle$-orbits, so
$\ind(h)\leq\ind(g)$.  Thus the minimum defining the Malle invariant is
attained by an element of prime order.  If such an element has prime
order $r$ on a set $\Omega$, then every orbit has size $1$ or $r$, and hence $\ind(g)=\frac{r-1}{r}(|\Omega|-|\operatorname{Fix}_\Omega(g)|)$.

Consider first the vector action and assume $n\geq2$.  Put $q=\ell$.
If $g$ has order $q$, then $g$ is unipotent.  Since $g\neq1$, its fixed
subspace has dimension at most $2n-1$, so $\ind(g)\geq q^{2n-2}(q-1)^2$.
For $q\geq3$ this is at least
$q^{2n-2}(q^2-1)/2$, with equality only when $q=3$ and the fixed
subspace is a hyperplane.

Now suppose that $g$ has prime order $r\neq q$, and write $\mu(g)$ for
its similitude multiplier.  The element is semisimple.  If
$\mu(g)=1$, then the $1$-eigenspace is a nondegenerate symplectic
subspace. The symplectic pairing pairs the $\lambda$- and
$\lambda^{-1}$-eigenspaces, so the $1$-eigenspace is orthogonal to all
other eigenspaces and the restriction of the pairing to it is
nondegenerate.  Since $g$ is nontrivial, this fixed space has dimension at most $2n-2$. Hence $\ind(g)\geq \frac12(q^{2n}-q^{2n-2})=\frac{q^{2n-2}(q^2-1)}2$.
If $\mu(g)\neq1$, then the fixed space is totally isotropic, because
for fixed vectors $v,w$ one has
$\langle v,w\rangle=\mu(g)\langle v,w\rangle$.  Its dimension is therefore at most $n$, and since $n\geq2$ the same lower bound follows from $\ind(g)\geq\frac12(q^{2n}-q^n)\geq\frac12(q^{2n}-q^{2n-2})$.
The bound is attained by a symplectic involution which is $-1$ on a
nondegenerate symplectic plane and $1$ on its symplectic orthogonal
complement.  This proves the vector formula for $n\geq2$.

For $n=1$, a nontrivial semisimple element of multiplier one has no
nonzero fixed vector, whereas an element with nontrivial multiplier can
have a one-dimensional fixed space.  The involution
$\operatorname{diag}(1,-1)\in\GL_2(\F_q)=\GSp_2(\F_q)$ realizes this
maximum and has index $q(q-1)/2$.  Unipotent elements have index
$(q-1)^2$, which is larger for odd $q$.  Hence the stated $n=1$ vector
value follows.

We next consider the projective action.  Again it is enough to consider a
projective element $\overline g$ of prime order $r$.  If it fixes a
projective point, choose a lift $g\in\GSp(V)$ and scale it so that one
fixed vector has eigenvalue $1$.  Since $\overline g^r=1$, this scaling
makes $g^r=1$; thus $g$ itself may be taken to have order $r$.  If
$\overline g$ has no fixed point, the resulting permutation index is
larger than the bounds below, so there is nothing to prove.

If $r=q$, then $g$ is unipotent and has only the eigenvalue $1$.  Its
fixed vector space has dimension at most $2n-1$, so the number of fixed projective points is at most $(q^{2n-1}-1)/(q-1)$. It follows that $\ind(\overline g)\geq q^{2n-2}(q-1)$.
For $n\geq2$ this is at least
$(q+1)(q^{2n-2}-1)/2$.

Suppose next that $r\neq q$.  Then $g$ is semisimple.  A projective
point fixed by $\overline g$ is an $\F_q$-rational eigenline of $g$.
Let $E_\lambda$ denote the eigenspace for an eigenvalue
$\lambda\in\F_q^\times$.  The similitude relation shows that
$E_\lambda$ pairs nondegenerately with $E_{\mu/\lambda}$, where
$\mu$ is the multiplier of $g$.  Consequently, if
$\lambda^2\neq\mu$, the two eigenspaces have the same dimension, while
if $\lambda^2=\mu$, the restriction of the alternating form to
$E_\lambda$ is nondegenerate and hence $\dim E_\lambda$ is even.  In
particular a non-scalar similitude has no eigenspace of dimension
$2n-1$.  Among all collections of rational eigenspaces with total
dimension at most $2n$ and with no eigenspace of dimension $2n-1$, the
number of eigenlines is therefore at most $(q^{2n-2}-1)/(q-1)+(q^2-1)/(q-1)$.
Indeed, the function $d\mapsto(q^d-1)/(q-1)$ is strictly convex in
$d$, so the number of lines is maximized by concentrating the available
dimension into blocks of dimensions $2n-2$ and $2$.  Hence
\[
 \ind(\overline g)
 \geq
 \frac12\left(
 \frac{q^{2n}-1}{q-1}
 -\frac{q^{2n-2}-1}{q-1}
 -\frac{q^2-1}{q-1}\right)
 =\frac{(q+1)(q^{2n-2}-1)}2.
\]
Equality is attained by the projective image of the same symplectic
involution which is $-1$ on a symplectic plane and $1$ on its
orthogonal complement.  This proves the projective formula for
$n\geq2$.

Finally, when $n=1$, a nontrivial projective semisimple element fixes at
most two points of $\PP^1(\F_q)$, and a split involution fixes exactly
two.  Its index is therefore $(q-1)/2$.  A projective unipotent element
fixes one point and has index $q-1$, so the involution gives the
minimum.  This completes the proof.
\end{proof}

\begin{remark}\label{rem:malle-transvection}
For the natural vector action with $n\geq2$, the transvection index is
$\ell^{2n-2}(\ell-1)^2$.  It agrees with the Malle invariant only when
$\ell=3$; for $\ell>3$ the involution in the preceding proof has strictly
smaller index.  For the natural projective action, the involution has
strictly smaller index than a transvection for every odd $q$.
\end{remark}

\begin{proof}[Proof of Corollary~\ref{cor:natural-actions}]
The symplectic group is transitive on the nonzero vectors of $V$, and hence so is $\GSp(V)$. The action is plainly faithful. Similarly $\GSp(V)$ acts transitively on the lines in $V$, and the kernel of this projective action is exactly the scalar centre; the induced action of $\PGSp(V)$ is therefore faithful and transitive. Substituting the indices from Propositions~\ref{prop:vector-index} and~\ref{prop:projective-index} into Theorems~\ref{thm:PGSp-main} and~\ref{thm:GSp-main} gives the two asserted exponents.
\end{proof}

The same argument gives the following bound for the regular action.

\begin{corollary}\label{cor:regular}
Let $G$ be either $\GSp_{2n}(\F_\ell)$ or $\PGSp_{2n}(\F_\ell)$, and let $G$ act regularly on itself. A transvection has permutation index $|G|(\ell-1)/\ell$. The present construction therefore gives the exponent $\ell/(2n(\ell-1)|G|)$ in the regular action.
\end{corollary}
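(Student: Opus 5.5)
The plan is to compute the index of a transvection in the regular action directly and then substitute it into Theorems~\ref{thm:GSp-main} and~\ref{thm:PGSp-main}. First, the regular action is faithful and transitive, so both theorems apply. Take $\Omega=G$ with $G$ acting by left multiplication, and let $g\in G$ have order $k$. The cyclic group $\langle g\rangle$ acts freely on $G$, so every orbit has size exactly $k$. There are therefore $|G|/k$ cycles, each contributing $k-1$ to the index. This gives $\ind(g)=|G|(k-1)/k$.

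Next I identify $k$. In $\GSp_{2n}(\F_\ell)$ a transvection $\tau$ has order $\ell$. In $\PGSp_{2n}(\F_\ell)$ its image $\overline\tau$ also has order $\ell$, by the last assertion of Lemma~\ref{lem:GSp-conjugacy}. In both cases $k=\ell$, so the index is $|G|(\ell-1)/\ell$, as claimed.

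Finally, I substitute into the main theorems. Theorem~\ref{thm:GSp-main} (for $\GSp$) and Theorem~\ref{thm:PGSp-main} (for $\PGSp$) give the exponent
\[
\frac{1}{2n\cdot|G|(\ell-1)/\ell}=\frac{\ell}{2n(\ell-1)|G|}.
\]

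There is no real obstacle here. The only point needing care is that the projective image of a transvection keeps order $\ell$, and that is already supplied by Lemma~\ref{lem:GSp-conjugacy}.
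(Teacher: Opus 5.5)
Your proposal is correct and matches the paper's proof: in the regular action an element of order $\ell$ has $|G|/\ell$ cycles of length $\ell$, so its index is $|G|(\ell-1)/\ell$, and substituting this into Theorems~\ref{thm:GSp-main} and~\ref{thm:PGSp-main} gives the stated exponent. You also check explicitly two points the paper leaves implicit: that the regular action is faithful and transitive, and that the projective image of a transvection still has order $\ell$ by Lemma~\ref{lem:GSp-conjugacy}.
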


\begin{proof}
An element of order $\ell$ has $|G|/\ell$ cycles, all of length $\ell$, in the regular action. Its index is therefore $|G|-|G|/\ell=|G|(\ell-1)/\ell$. The assertion follows from the corresponding main theorem.
\end{proof}

\begin{remark}\label{rem:n1}
For $n=1$, the vector action of $\GL_2(\F_\ell)$ has degree $\ell^2-1$ and a transvection has index $(\ell-1)^2$. The projective action of $\PGL_2(\F_\ell)$ has degree $\ell+1$ and index $\ell-1$. Thus the exponents become $1/(2(\ell-1)^2)$ and $1/(2(\ell-1))$, respectively.
\end{remark}

\printbibliography

\end{document}